\setulmarginsandblock{\uppermargin}{\lowermargin}{*}
\date{}
\numberwithin{equation}{section}    
\definecolor{hellgrau}{rgb}{0.93,0.93,0.93}
\definecolor{hellergrau}{rgb}{0.97,0.97,0.97}
\definecolor{grau}{rgb}{0.93,0.93,0.93}
\definecolor{hellblau}{rgb}{0.8,0.8,2.0}
\definecolor{blau}{rgb}{0.3,0.5,2.0}
\definecolor{hellrot}{rgb}{2.0,0.6,0.6}
\definecolor{gruen}{rgb}{0.3,0.75,0.2}
\definecolor{rot}{rgb}{0.9,0.1,0.1}
\newcommand{\IR}{{\mathbb{R}}}
\newcommand{\IN}{{\mathbb{N}}}
\newcommand{\IQ}{{\mathbb{Q}}}
\newcommand{\IC}{{\mathbb{C}}}
\newcommand{\I}{{\mathds{1}}}
\newcommand*{\close}[1]{\overline{#1}} 
\newcommand*{\charfkt}[1]{\mathds{1}_{#1}} 
\newcommand*{\defas}{:=}
\newcommand*{\asdef}{=:}
\newcommand*{\G}{G} 
\newcommand*{\g}{t} 
\newcommand*{\subG}{K} 
\newcommand*{\identif}{\mu} 
\newcommand*{\Cck}{C_c(\subG)} 
\newcommand*{\Cc}{C_c(\G)} 
\newcommand*{\CC}{C_c(\IC)} 
\newcommand*{\C}[1]{C(#1)} 
\newcommand*{\cc}[1]{C_c(#1)} 
\newcommand{\Cu}{C_u(\IR)}
\newcommand*{\nset}{M} 
\newcommand*{\act}{\alpha} 
\newcommand*{\trans}{\tau}
\newcommand*{\MG}{\mathcal{M}(\G)} 
\newcommand*{\MGp}{\mathcal{M}^{+}(\G)} 
\newcommand*{\MGtb}{\mathcal{M}^{\infty}(\G)} 
\newcommand*{\gelf}{J} 
\newcommand*{\oneset}{\Omega} 
\newcommand*{\twoset}{X} 
\newcommand*{\threeset}{\Omega_{\mu}} 
\newcommand*{\onelem}{\omega} 
\newcommand*{\twoelem}{x} 
\newcommand*{\threelem}{\mu_{x}} 
\newcommand*{\threelemvari}[1]{\mu_{#1}} 
\newcommand*{\mone}{m} 
\newcommand*{\mtwo}{m'} 
\newcommand*{\mthree}{m_{\identif}} 
\newcommand*{\LtwoG}{L^{2}(X,m)} 
\newcommand*{\Loneinf}{L^{\infty}(\oneset,\mone)} 
\newcommand*{\Lonetwo}{L^{2}(\oneset,\mone)} 
\newcommand*{\Ltwo}{L^{2}(\twoset,\mtwo)} 
\newcommand*{\Ltwoinf}{L^{\infty}(\twoset,\mtwo)}
\newcommand*{\Lthree}{L^{2}(\threeset,\mthree)} 
\newcommand*{\Lthreeinf}{L^{\infty}(\threeset,\mthree)} 
\newcommand*{\fktG}{\varphi} 
\newcommand*{\fktC}{\vartheta}
\newcommand*{\fktGseq}[1]{\fktG_{#1}} 
\newcommand*{\fktCseq}[1]{\widetilde{\fktC}_{#1}} 
\newcommand*{\fktCSeq}[1]{\widehat{\fktC}_{#1}} 
\newcommand*{\fix}{L} 
\newcommand*{\Nrx}{\widetilde{N}'} 
\newcommand*{\Nx}{N'} 
\newcommand*{\unit}{U_{\identif}} 
\newcommand*{\Uthree}{U_{\identif}} 
\newcommand*{\Identif}{\identif_{\#}} 
\newcommand*{\Nthree}{N_{\identif}} 
\newcommand*{\ftwo}{f} 
\newcommand*{\fthree}{f} 
\newcommand*{\Unit}{U} 
\newcommand*{\Three}{T^{\identif}} 
\newcommand*{\Two}{T'} 
\newcommand*{\actwo}{\alpha'} 
\newcommand*{\acthree}{\alpha^{\identif}} 
\newcommand*{\Nrist}{N|_{\Cck}} 
\newcommand*{\impl}{\Rightarrow} 
\newcommand*{\iso}{\cong}
\newcommand{\too}{\longrightarrow}
\newcommand\absinf[1]{\ensuremath{\|#1\|_{\infty}}}
\newcommand\abstwo[1]{\ensuremath{\|#1\|_{L^2}}}
\newcommand{\lin}{\operatorname{Lin}}
\newcommand{\sub}{\subset}
\newcommand{\A}{\mathcal{A}} 
\newcommand{\N}{\mathcal{N}} 
\newcommand{\Ahat}{\hat{\A}} 
\renewcommand*{\a}{a} 
\newcommand*{\ahat}{\hat{\a}} 
\newcommand{\B}{\mathcal{B}}
\newcommand\abs[1]{\ensuremath{\vert#1\vert}}
\newcommand\norm[1]{\ensuremath{\|#1\|}}
\newcommand{\supp}{\operatorname{supp}}
\renewcommand{\phi}{\varphi}
\renewcommand{\epsilon}{\varepsilon}
\renewcommand{\upsilon}{\vartheta}
\newtheorem{theorem}{Theorem}[section]
\newtheorem{lemma}[theorem]{Lemma}
\newtheorem{prop}[theorem]{Proposition}
\newtheorem{remark}[theorem]{Remark}
\newtheorem{definition}{Definition}[section]
\title{Characterization of Translation Bounded Measure Dynamical Systems and Positive Measure Dynamical Systems within the Spatial Processes }
\author{Franziska Sieron}
\begin{document}

\thispagestyle{empty}
\maketitle
\begin{abstract}
In this paper we consider spatial processes and measure dynamical systems over locally compact Abelian groups. We characterize when a spatial processes is equivalent to a translation bounded measure dynamical systems and we characterize when a spatial processes is equivalent to a positive measure dynamical systems. The basic idea of our approach is the identification of the elements of a spatial process with translation bounded measures respectively positive measures by applying the Gelfand theory of $C^*$-algebras.
\end{abstract}

\section{Introduction}

Since the discovery of quasicrystal by Schechtman in 1982 \cite{Sche} the field of aperiodic order has received a lot of attention, see e.g. the monographs and survey collections \cite{BaGr_ApOrd_I,BaGr_ApOrd_II,BaMo00,KeLeSa,Moo,Pat}.

In this context, the modeling via dynamical systems has a long history.
In one dimension this leads to symbolic dynamics. A very famous class of examples comes from primitive substitutions. Cut-and-project schemes provide another large class of examples. These schemes can be used to construct examples in higher dimensions as well see e.g. \cite{SchCPS,SchGM}. This leads to tiling dynamical systems and Delone dynamical systems see e.g. \cite{Sol}. Specifically, Delone sets provide a mathematical abstraction of the position of the atoms in quasicrystals see e.g. \cite{Lag99}. Another method to model quasicrystals is given in \cite{Bak} where the aperiodic distribution of quasicrystals is described via (continuous) almost periodic density functions.
Moreover certain point processes have also been considered in \cite{BaBiMo,GouQC,GouDif}.
These descriptions are quite different. However, they can all be put under a common umbrella of measure dynamical systems.
In fact two classes of measures have played a role:\\
One is the class of translation bounded measures. These measures are not necessarily positive but they have a uniform boundedness property. The measures coming from Dirac combs of Delone sets, which are frequently considered in diffraction theory of dynamical system, are included in this class, see e.g. \cite{BaMo04,LeMoSo}. This leads to translation bounded measure dynamical systems (TBMDS). The approach to these systems has been presented in \cite{BaLe04}.
The other class are positive measures. This class includes the stochastic point processes of \cite{BaBiMo,GouQC,GouDif}.
Here we can drop the boundedness condition in favor to the positivity of the measures. The according systems, the positive measure dynamical systems (PMDS), are discussed in \cite{LeSt}.

In 2009 Lenz and Moody introduced in \cite{LeMo} the concept of spatial processes and developed a diffraction theory for them. The spatial processes of \cite{LeMo} include both the TBMDS and the PMDS.
In fact the concept introduced by them is even more general and does not need dynamical systems but just the representation of groups. \\
This naturally leads to the question of a characterization of TBMDS and PMDS within the spatial processes.
The aim of this article is to give an answer to this question, which will be provided in the two main theorems of the article. The first theorem yields a characterization of TBMDS, while the second theorem yields a characterization of PMDS within the spatial processes.

The article is organized as follows:
In the first section we introduce the different concepts properly and give precise definitions. The second section deals with Theorem \ref{ThmEquiTBMDS}, which states a characterisation of TBMDS. The third section considers Theorem \ref{equiPMDS}, which states a characterisation of PMDS. 
The last section, the appendix, shows that some apparently different definitions in the literature in fact agree. We also show
a more technical statement about the convergence of the convolution of functions.


\section{Preliminaries}
In this section, we introduce notations and concepts used throughout this article. In particular, we introduce the concept of stationary processes.

Let $\G$ be a locally compact, $\sigma$-compact, abelian group and $X$ a compact topological Hausdorff space space. Let 
$$\act:\G\times X\too X,~(\g,x)\mapsto\act_{\g}(x)$$
be a continuous action of $\G$ on $X$, where $\G\times X$ carries the product topology. Then $(X,\act)$ is a \emph{topological dynamical system}. If $(X,\Sigma_{X})$ is a measurable space, where $\Sigma_{X}$ is a $\sigma$- algebra on $X$, and the action $\act$ is measurable in each variable, then we call $(X,\act)$ a \emph{ measurable dynamical system}.\\
We denote the vector space of complex valued continuous functions on $\G$ with compact support by $\Cc$.
For a compact subset $\subG\subset\G$, the set of complex valued continuous functions on $\G$ with support in $\subG$ is given by $\Cck$.
The space $\Cc$ is equipped with the locally convex limit topology induced by the canonical embedding $\Cck\hookrightarrow\Cc$. The translation by $\g\in\G$ on $\Cc$ is given by 
$$\trans_{\g}\fktG :=\fktG(\cdot-\g)$$
with $\fktG\in\Cc$. Let $\mone$ be a $\G$-invariant probability measure on $\twoset$. The corresponding set of equivalence classes of square integrable functions on $\twoset$ is denoted by $\LtwoG$. The action $\act$ induces a unitary representation $T$ of $\G$ on the Hilbert space $\LtwoG$, where for each $t\in\G$ the unitary operator
$$T_{\g}:\LtwoG\too\LtwoG ~\text{ is given by }~ T_{\g}f:= f\circ \act_{-t}.$$ 
This representation is also known as the \emph{Koopman representation} and $T_{\g}$ as \emph{Koopman operator} associated to $(X,\act)$  with measur $m$.

We now discuss spatial processes following \cite{LeMo}.
A \emph{spatial processes} $\N=(N,\twoset,\mone,T)$ consists of
\begin{itemize}
\item a dynamical system $(\twoset,\act)$ over the locally compact, abelian group group $\G$,
\item a $\G$-invariant probability measure $m$ on $\twoset$,
\item a linear and continuous map
$$N:\Cc\too\LtwoG \mbox{ with }\,\close{N(\fktG)}=N(\close{\fktG}),\mbox{ for all }\fktG\in\Cc,$$
satisfying $T_{\g}\circ N=N\circ\trans_{\g}$ for all $\g\in\G$.
\end{itemize}
A spatial process is called \emph{full}, if the linear span
$$\lin\{\fktC_1(N(\fktG_1))\cdots\fktC_n(N(\fktG_n)):~\fktC_j\in\ \CC,~\fktG_j\in\Cc~\mbox{for}~j=1,\cdots,n\text{ and }n\in\IN\}$$
is a dense subset of $\LtwoG$.

\begin{definition}\label{EquiProc}
Two spatial processes $\N=(N,X,m,T)$ and $\N’=(N',X’,m’,T’)$ are equivalent (or spatially isomorphic) if there exists a unitary map $\Unit:L^2(X,m)\too L^2(X’,m’)$ with
\begin{itemize}
\item $\Unit\circ N=N’,$
\item $\Unit\circ T_{\g}=T’_{\g}\circ\Unit$ for all $\g\in G,$
\item $\Unit(fg)=\Unit(f)\Unit(g)$ for all $f,g\in L^{\infty}(X,m)$ and $\Unit^{-1}(f’g’)=\Unit^{-1}(f’)\Unit^{-1}(g’)$ for all $f’,g’\in L^{\infty}(X’,m’).$
\end{itemize}
The map $\Unit$ is called spatial isomorphism between the processes.
\end{definition}

\begin{remark}
Note that the map $U^{-1}$ has analogous properties with the roles of $N$ and $N'$ reversed. Hence, spatially isomorphy is indeed an equivalence relation. 
\end{remark}

The following statement is probably known but does not seem to appear in the literature so far. 
First we recall that a measurable dynamical system is called \emph{ergodic} if every measurable function $f$ satisfying $f=T_{\g}f$ for all $\g\in\G$ is constant almost surly.

\begin{theorem}
Let $\N=(N,\oneset,\mone,T)$ and $\N’=(\Nx,\twoset,\mtwo,\Two)$ be two equivalent spatial processes. Then $(\Nx,\twoset,\mtwo,\Two)$ is  ergodic if and only if $(N,\oneset,\mone,T)$ is ergodic.
\end{theorem}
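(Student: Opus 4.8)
The plan is to recast ergodicity as a one-dimensionality statement about the fixed space of the Koopman representation and then to transport this across the spatial isomorphism $\Unit\colon\Lonetwo\too\Ltwo$. Write $\operatorname{Fix}(T)$ for the space of $f\in\Lonetwo$ with $T_{\g}f=f$ for all $\g\in\G$, and analogously $\operatorname{Fix}(\Two)\subset\Ltwo$. The first step is the standard reformulation that $(N,\oneset,\mone,T)$ is ergodic if and only if $\operatorname{Fix}(T)=\IC\cdot\I$. Since $\mone$ is $\G$-invariant we have $T_{\g}\I=\I$, so $\operatorname{Fix}(T)$ always contains the constants and ergodicity is precisely the assertion that it contains nothing else. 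The only delicate point is that the stated definition quantifies over all \emph{measurable} invariant functions, whereas $\operatorname{Fix}(T)$ is defined inside $L^2$; this is bridged by observing that if $f$ is measurable with $T_{\g}f=f$, then post-composition with the bounded Borel map $s\mapsto s/(1+\abs{s})$ produces a bounded, hence square-integrable, function that is still invariant (composition with a fixed Borel function commutes with $T_{\g}$ because $T_{\g}$ acts by the point transformation $\act_{-\g}$). Hence ``all invariant measurable functions are constant'' is equivalent to ``$\operatorname{Fix}(T)=\IC\cdot\I$'', and likewise for $\Two$.

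Next I would show that $\Unit$ maps constants to constants. As $\I$ is the unit of $\Loneinf$, multiplicativity of $\Unit$ on $L^\infty$ yields $\Unit(\I)=\Unit(\I\cdot\I)=\Unit(\I)\,\Unit(\I)$, so $h:=\Unit(\I)$ satisfies $h^2=h$ almost everywhere and therefore takes only the values $0$ and $1$; that is, $h=\I_A$ for some measurable $A\subset\twoset$. Because $\Unit$ is unitary and $\mone$ is a probability measure, $\mtwo(A)=\abstwo{h}^2=\abstwo{\I}^2=1$, whence $\Unit(\I)=\I$. By linearity $\Unit$ fixes every constant function, and applying the same argument to $\Unit^{-1}$, which has the analogous multiplicative property, gives $\Unit^{-1}(\I)=\I$ as well. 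Thus $\Unit$ carries $\IC\cdot\I$ onto $\IC\cdot\I$.

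Then I would invoke the intertwining relation $\Unit\circ T_{\g}=\Two_{\g}\circ\Unit$. If $f\in\operatorname{Fix}(T)$, then $\Two_{\g}(\Unit f)=\Unit(T_{\g}f)=\Unit f$ for all $\g$, so $\Unit f\in\operatorname{Fix}(\Two)$; the identical computation for $\Unit^{-1}$ shows that $\Unit$ restricts to a linear bijection $\operatorname{Fix}(T)\to\operatorname{Fix}(\Two)$. Combining this with the previous paragraph, $\Unit$ maps $\operatorname{Fix}(T)$ bijectively onto $\operatorname{Fix}(\Two)$ and sends the subspace of constants onto the subspace of constants, so $\operatorname{Fix}(T)=\IC\cdot\I$ holds if and only if $\operatorname{Fix}(\Two)=\IC\cdot\I$. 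By the reformulation of the first step, this says exactly that $(N,\oneset,\mone,T)$ is ergodic if and only if $(\Nx,\twoset,\mtwo,\Two)$ is ergodic.

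I expect the main obstacle to lie in the first paragraph rather than in the transport argument: one must make sure that ergodicity in the sense of the stated definition genuinely coincides with one-dimensionality of the $L^2$ fixed space, in particular that the passage from invariant measurable functions to invariant bounded functions is valid in the present generality of a $\sigma$-compact locally compact abelian $\G$ acting measurably. Once this equivalence is secured, the idempotent computation $\Unit(\I)=\I$ and the one-line intertwining that moves invariant vectors through $\Unit$ finish the proof.
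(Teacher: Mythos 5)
Your proof is correct and takes essentially the same route as the paper's: transport invariant functions across the spatial isomorphism $\Unit$ via the intertwining relation $\Unit\circ T_{\g}=\Two_{\g}\circ\Unit$, and use multiplicativity on $L^{\infty}$ together with unitarity to show that $\Unit$ maps constants to constants. If anything, your write-up is tighter than the paper's at the two points where it is loose: you explicitly reconcile the measurable-function definition of ergodicity with one-dimensionality of the $L^{2}$ fixed space (the paper silently tests ergodicity only against $L^{2}$ invariant functions), and your idempotent-plus-norm computation supplies the missing justification for the paper's bare assertion that $\Unit^{-1}(\mathds{1}_{\twoset})=\mathds{1}_{\oneset}$.
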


\begin{proof}
We show the implication $\N$ egodic $\impl$ $\N'$ ergodic. The other implication follows clearly by interchanging $\N$ and $\N'$.

For this purpose we consider a
$g’\in\Ltwo$ with $\Two_{\g}g’=g’$ for all $\g\in\G$ and deduce that $g’$ is constant almost surly.

Let $\Unit$ be the spatial isomorphism between $\N$ and $\N’$. Let $\Unit^{-1}$ be its inverse and $f=\Unit^{-1}(g’)$. Then we have
$$T_{\g}f=T_{\g}\Unit^{-1}(g’)=\Unit^{-1}(\Two_{\g}g’)=\Unit^{-1}(g’)=f.$$
Consequently $f$ is constant almost surly since $\N$ is ergodic.
Since $\mtwo$ is a probability measure we have $\mathds{1}_{\twoset}\in\Ltwo$ and we obtain for a function $h’\in\Ltwoinf$ 
$$\Unit^{-1}(\mathds{1}_{\twoset})\Unit^{-1}(h’)=\Unit^{-1}(\mathds{1}_{\twoset}h’)=\Unit^{-1}(h’).$$
It follows that $\mathds{1}_{\oneset}=\Unit^{-1}(\mathds{1}_{\twoset})$ and thus $\Unit(\mathds{1}_{\oneset})=\mathds{1}_{\twoset}$.
Hence, we achieve for a constant $C$
$$\Unit(f)
=\Unit(f\mathds{1}_{\oneset})
=\Unit(C\mathds{1}_{\oneset})
=C~\Unit(\mathds{1}_{\oneset})
=C~\mathds{1}_{\twoset}
=C=g’ \mbox{~a.s.},$$
which finishes the proof.
\end{proof}

Next we introduce the two classes of examples of spatial processes,  which will be considered more closely in this article. These examples are in fact dynamical systems on $\MG$. The set $\MG$ is defined to be the dual of $\Cc$, i.e. the space of continuous linear functionals on $\Cc$. Such a functional can be considered as complex Borel measures by the Riesz-Markov theorem. Thus $\MG$ is a \emph{ set of measures on $G$} and carries the vague topology which equals the weak-$*$-topology. We give a brief introduction to these dynamical systems.

The first class is the class of dynamical systems on translation bounded measures (see \cite{BaLe04} for full details and proofs). 
Here we call a Borel measure $\nu$ on $\G$ \emph{translation bounded} if the map  
$$\G\to\IC,~t\mapsto {\int_{\G}\trans_{\g}\fktG \,\mathrm d\nu(s)}$$ 
is bounded for all $\fktG\in\Cc$ and denote the set of translation bounded measures on $\G$ by $\MGtb$. 
The action $\act$ on $\MG$ is given via
$\act:\G\times\MG\too\MG$ with 
$$\act_{\g}(\nu)(\fktG):=\delta_{\g}*\nu(\fktG)=\int_{\G}\fktG(t+s)\,\mathrm d\nu(s)=\int_{\G}\trans_{-\g}\fktG\,\mathrm d\nu(s)=\nu(\trans_{-\g}\fktG).$$
Let $\oneset$ be a compact subset of $\MGtb$, which is $\act$-invariant. 
Then $\act$ restricted to $\oneset$ is continuous and thus $(\oneset,\act)$ a dynamical system. We call it a \emph{translation bounded measures dynamical system (TBMDS)}.
For such a TBMDS $(\oneset,\act)$ we define the map 
$$N:\Cc\too\Lonetwo ~\text{ by }~
N(\fktG)(\onelem):=\onelem(\fktG):=\int_G\fktG (\g)\,\mathrm d\onelem$$
for $\fktG\in\Cc$.
Then $N(\fktG)$ is continuous on $\oneset$ by definition of the vague topology. 
Moreover, since $\oneset$ is a compact set, we obtain 
$$\absinf{N(\fktG)} = \sup_{\onelem\in\oneset}\abs{N(\fktG)}< \infty.$$
This yields that $N(\fktG)$ belongs to $\Loneinf$ for all $\fktG\in\Cc$ and is clearly compatible with translations.
Thus $\N=(N,\oneset,\mone,T)$ is a spatial process. It is also a full spatial process, as $N(\fktG)$ separates the points by Stone-Weierstrass.

\textbf{Remark.}
Note, that in literature a slightly different definition of the map $N$ can also be found. More details will be provided in the appendix.

The second class of examples concerns positive measures on $\G$ (see \cite{LeSt} for full details and proofs). 
Here we require  in addition that $\G$ has a countable basis of the topology. Let $\MG$ and $\act$ be defined as above and let $\Sigma_{\MG}$ denote the Borel $\sigma$-algebra on $\MG$. Since $\act$ is continuous in each variable, the action is also measurable.
An element $\nu$ of $\MG$ is called \emph{positive} if $\nu(\fktG)\geq 0$ for all $\fktG\in\Cc$ with $\fktG\geq 0$. 
We will consider $\MGp\subset\MG$, the set of \emph{positive measures on $\G$}, with the $\sigma$-algebra $\Sigma_{\MGp}$, which is the restriction of $\Sigma_{\MG}$ to $\MGp$. This set is invariant under the action $\act$ and thus $(\MGp, \act)$ is a measurable dynamical system on positive measures. 
Similar to the TBMDS, we define the map 
$$N:\Cc\too\{\text{functions on } \MGp\} ~\text{ by }~
N(\fktG)(\onelem):=\onelem(\fktG):=\int_G\fktG (\g)\,\mathrm d\onelem$$
for all $\fktG\in\Cc$.
Note that the functions $N(\fktG)$ are continuous and thus measurable and that $N(\fktG)\geq 0$ for $\fktG\geq 0$.
However $N(\fktG)$ is not bounded in general. Therefor we need an additional assumption: Let $m$ be an $\act$-invariant probability measure on $\MGp$, which is \emph{square integrable}, i.e. it satisfies 
$$\int_{\MGp}\left|N(\fktG)\right|^2\,\mathrm dm<\infty$$
for all $\fktG\in\Cc$.
Then $L^2(\MGp,m)$ is a Hilbert space and $\N=(N,\oneset,\mone,T)$ is a spatial process.
Since $\Sigma_{\MGp}$ is generated by the $N(\fktG)$ we obtain the fullness condition on $L^2(\MGp,m)$. Hence $\N=(N,\oneset,\mone,T)$ is a full spatial process and we call $(\MGp,\act)$ a \emph{positive measure dynamical system (PMDS)}.\\


\section{The equivalence of a spatial process to a TBMDS}
In this section we examine under which conditions a spatial process is equivalent to a TBMDS. We already know, that $N(\fktG)\in\Loneinf$ holds for all $\fktG\in\Cc$, in a TBMDS $\N=(N,\oneset,\mone,T)$. Our main concern will be the converse stated in the following theorem. 

\begin{theorem}\label{ThmEquiTBMDS}
Let $\N=(N,\oneset,\mone,T)$ be a full spatial process with $N(\fktG)\in L^{\infty}(\oneset,\mone)$ for all $\fktG\in \Cc$. 
Then $\N$ is equivalent to a translation bounded measure dynamical system (TBMDS).
\end{theorem}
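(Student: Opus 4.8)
The plan is to realise the algebra generated by the observables $N(\fktG)$ concretely as continuous functions on a compact space via the Gelfand theory of commutative $C^*$-algebras, and to read off from the points of its spectrum a family of translation bounded measures. First I would let $\A$ be the unital $C^*$-subalgebra of $\Loneinf$ generated by $\{N(\fktG):\fktG\in\Cc\}$. Each $N(\fktG)$ is a bounded (by hypothesis) normal element, and $\overline{N(\fktG)}=N(\overline{\fktG})$ keeps $\A$ self-adjoint, so for every $\fktC\in\CC$ the element $\fktC(N(\fktG))$ produced by the continuous functional calculus already lies in $\A$; hence the whole linear span occurring in the fullness condition is contained in $\A$. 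Since $\Loneinf\subset\Lonetwo$ and $\mone$ is a probability measure, fullness thus says precisely that $\A$ is dense in $\Lonetwo$, i.e. that the constant $\mathds{1}$ is cyclic for $\A$. Let $Y:=\Delta(\A)$ denote the (compact Hausdorff) Gelfand spectrum and $\Gamma:\A\too\C{Y}$ the Gelfand isomorphism. Because every $T_{\g}$ is a $*$-automorphism of $\Loneinf$ sending the generator $N(\fktG)$ to $N(\trans_{\g}\fktG)\in\A$, it restricts to a $*$-automorphism of $\A$ and so induces a continuous $\G$-action on $Y$.

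Next I would pass from points of $Y$ to measures. For a character $y\in Y$ I set $\threelemvari{y}(\fktG):=y(N(\fktG))=\Gamma(N(\fktG))(y)$, which is a continuous linear functional on $\Cc$ and thus a measure $\threelemvari{y}\in\MG$. The uniform estimate $\abs{\threelemvari{y}(\trans_{\g}\fktG)}=\abs{y(N(\trans_{\g}\fktG))}=\abs{y(T_{\g}N(\fktG))}\le\absinf{N(\fktG)}$, valid for all $\g\in\G$ and $y\in Y$ because $T_{\g}$ is $L^{\infty}$-isometric, shows that each $\threelemvari{y}$ is translation bounded. The map $\iota:Y\too\MG$, $y\mapsto\threelemvari{y}$, is continuous for the vague topology (each $\fktG$ yields the continuous function $\Gamma(N(\fktG))$ on $Y$) and injective (the $N(\fktG)$ generate $\A$, so characters are separated), so $\twoset':=\iota(Y)$ is a compact subset of $\MGtb$. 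A short computation from $T_{\g}\circ N=N\circ\trans_{\g}$ gives $\iota(\g\cdot y)=\act_{\g}(\iota(y))$, whence $\twoset'$ is $\act$-invariant; thus $(\twoset',\act)$ is a TBMDS with canonical map $\Nx(\fktG)(\nu)=\nu(\fktG)$.

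Then I would transport the measure. Integration against $\mone$ restricts to a state on $\A$, which through $\Gamma$ becomes a positive normalised functional on $\C{Y}$; by Riesz--Markov it is represented by a regular Borel probability measure, and identifying $Y$ with $\twoset'$ via the homeomorphism $\iota$ I obtain an $\act$-invariant probability measure $\mtwo$ on $\twoset'$. Define $\Unit$ on the dense subspace $\A$ by $\Unit(a):=\Gamma(a)$, regarded in $\Ltwop$. As $\int_{\oneset}\overline a a\,\mathrm dm=\int_{\twoset'}\abs{\Gamma(a)}^2\,\mathrm d\mtwo$, this is isometric on $\A$; since $\A$ is dense in $\Lonetwo$ and $\Gamma(\A)=\C{\twoset'}$ is dense in $\Ltwop$, it extends to a unitary $\Unit:\Lonetwo\too\Ltwop$. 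By construction $\Unit(N(\fktG))=\Nx(\fktG)$, and a direct computation using $T_{\g}\circ N=N\circ\trans_{\g}$ together with the definition of the induced action on $Y$ gives $\Unit\circ T_{\g}=\Two_{\g}\circ\Unit$.

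The step I expect to be the main obstacle is the multiplicativity demanded in Definition \ref{EquiProc}, because $\Unit$ is a priori multiplicative only on $\A$, not on all of $\Loneinf$. To handle it I would exploit that $\mathds{1}$ is cyclic for the abelian algebra $\A$ (this is exactly fullness), so that $\A$ generates $\Loneinf$ as a maximal abelian von Neumann algebra of multiplication operators on $\Lonetwo$. From $\Unit M_a=M_{\Gamma(a)}\Unit$ for $a\in\A$ it follows that conjugation $f\mapsto\Unit M_f\Unit^{-1}$ maps $\Loneinf$ into the commutant of $\{M_{\Gamma(a)}:a\in\A\}$, which is the maximal abelian algebra $L^{\infty}(\twoset',\mtwo)$; hence $\Unit M_f\Unit^{-1}=M_{\Unit(f)}$ with $\Unit(f)\in L^{\infty}(\twoset',\mtwo)$, and evaluating at $\mathds{1}$ (using $\Unit(\mathds{1})=\mathds{1}$) yields $\Unit(fg)=\Unit(f)\Unit(g)$. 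The same argument with the roles of $\N$ and $\N'$ reversed gives the property for $\Unit^{-1}$, so $\Unit$ is a spatial isomorphism and $\N$ is equivalent to the TBMDS $(\Nx,\twoset',\mtwo,\Two)$.
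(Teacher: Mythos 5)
Your strategy coincides with the paper's: form the unital $C^*$-subalgebra $\A$ of $\Loneinf$ generated by $N(\Cc)$, use fullness to see that $\A$ is dense in $\Lonetwo$, identify the Gelfand spectrum with a compact, translation invariant set of translation bounded measures, push $\mone$ forward through the Gelfand isomorphism, and extend that isomorphism to a unitary realizing the spatial equivalence. Where you differ you are in fact more careful than the paper: the multiplicativity of $\Unit$ on all of $\Loneinf$ is merely asserted there (``$\Unit$ is multiplicative since it is the extension of the Gelfand representation''), whereas your commutant argument --- $\mathds{1}$ is cyclic for $\A$, so $\A$ generates a maximal abelian von Neumann algebra and conjugation by $\Unit$ carries multiplication operators to multiplication operators --- is an actual proof of that step.

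There is, however, one genuine gap, and it sits at the point where your construction starts: you assert that for a character $y$ the functional $\fktG\mapsto\threelemvari{y}(\fktG)=y(N(\fktG))$ is \emph{continuous} on $\Cc$, which is exactly what Riesz--Markov requires in order to produce the measures $\threelemvari{y}$ at all. This continuity is not among the hypotheses: a character is continuous on $\A$ for the $\absinf{\cdot}$-norm, while the definition of a spatial process only makes $N$ continuous as a map $\Cc\too\Lonetwo$, and the assumption $N(\fktG)\in\Loneinf$ is a pointwise statement that carries no bound of $\absinf{N(\fktG)}$ in terms of $\absinf{\fktG}$. What is needed is that for every compact $\subG\subset\G$ the restriction $\Nrist:\Cck\too\Loneinf$ is bounded; this is precisely the content of Proposition \ref{xNcontin} in the paper, proved there by a closed graph argument: if $\fktG_n\to\fktG$ in $(\Cck,\absinf{\cdot})$ and $N(\fktG_n)\to f$ in $\Loneinf$, then $N(\fktG_n)\to N(\fktG)$ in $\Lonetwo$, so a subsequence converges almost everywhere, forcing $f=N(\fktG)$ almost surely; hence $\Nrist$ is closed and therefore bounded. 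Without this step your estimate $\abs{y(N(\fktG))}\leq\absinf{N(\fktG)}$ gives no control in terms of $\absinf{\fktG}$, the functionals $y\circ N$ are not known to be represented by measures, and the set $\twoset'$ cannot be formed. (The same $L^{\infty}$-continuity of $N$ is also what makes $\g\mapsto T_{\g}$ strongly continuous on $\A$, which your claim that the induced $\G$-action on $Y$ is continuous implicitly uses.) Once this proposition is inserted, the rest of your argument goes through.
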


The remainder of this section is devoted to the proof of Theorem \ref{ThmEquiTBMDS} with the actual proof given at the very end of this section.
The basic idea is to apply the Gelfand theory, see e.g. \cite{Ped}, to the commutative $C^*$-algebra  
$$\A:= \A(N(\Cc),\mathds{1}_{\A}).$$ 
This is the smallest algebra that is closed with respect to $\absinf{\cdot}$ and satisfies
\begin{itemize}
\item $N(\fktG)\in\A$ for all $\fktG\in\Cc$,
\item $\A$ has a unit $\mathds{1}_{\A}$.
\end{itemize}
The subalgebra $\A$ of $\Loneinf$ is obviously commutative,  $\norm{f\close{f}}=\norm{f}^2$ holds as well and it is complete by definition. 
By the Gelfand theory, we have 
$\A \iso C(\twoset)$ where $\twoset$ is a compact space. 
Specifically we obtain
$$\twoset:=\hat{\A},$$ 
where $\hat{\A}$ is the set of all nonzero, linear, multiplicative functionals on $\A$, which are automatically continuous by the Gelfand theory.
Note, that ${T_{\g}:\Lonetwo\too\Lonetwo}$ maps $\A$ into itself. Hence we can consider $T_{\g}$ as a map on $\A$ via $T_{\g}f=f\circ \act_{-\g}$ for $\g\in\G$.

The idea is to identify the elements of $\twoset$ with translation bounded measures by a suitable map 
$$\identif:\twoset\too\threeset\subset\MGtb.$$
In order to accomplish this, we have to show several statements. We start with the investigation of the algebra $\A$.

\begin{lemma}\label{AlgDense}
The algebra $\A$ is a dense subset of $\Lonetwo$. Furthermore the unit $\mathds{1}_{\A}$ is equal to $\mathds{1}_{\oneset}$, where $\mathds{1}_{\oneset}$ maps $\oneset$ to $\IC$ with $\onelem\mapsto 1$ for all $\onelem\in\oneset$.
\end{lemma}

\begin{proof}
We already know from the definition of spatial processes that
$$\B:=\lin\{\fktC_1(N(\fktG_1))\cdots\fktC_n(N(\fktG_n)):~\fktC_j\in\ \CC,~\fktG_j\in\Cc~\mbox{for}~j=1,\cdots,n,~n\in\IN\}$$ is dense in $\Lonetwo$. We show that $\B\subset\A$ and then infer that $\A$ is dense.

Let $\fktG_1,\cdots,\fktG_n\in\Cc$ be given  and $\fktC_1,\cdots,\fktC_n\in\CC$.
Since $N(\fktG_j)\in\Loneinf$, the image of $N(\fktG_j)$  is included in $B_{R_j}(0)$, which is the closed ball around $0$ with radius $R_j=\absinf{N(\fktG_j)}$. By the Stone–Weierstrass theorem we can approximate the compactly supported functions $\fktC_j$ uniformly by polynomials $p_k^{(j)}$ on $B_{R_j}(0)$ such that $p_k^{(j)}(N(\fktG_j))$ converges to $\fktC_j(N(\fktG_j))$ uniformly for $k\to\infty$ with respect to the norm on $L^{\infty}$. 

Now we set $P_k\defas p_k^{(1)}(N(\fktG_1))\cdots p_k^{(n)}(N(\fktG_n))$. Thus, $P_k$ is in $\A$ as $\A$ is an algebra. From the previous considerations, we obtain the convergence 
$$P_k\rightarrow\fktC_1(N(\fktG_1))\cdots\fktC_n(N(\fktG_n))\in\A$$ 
with respect to $\absinf{\cdot}$, since $\A$ is closed. 

As $m$ is a probability measure, convergence with respect to $\absinf{\cdot}$ implies convergence with respect to $\abstwo{\cdot}$. Thus we have $P_k\rightarrow\fktC_1(N(\fktG_1))\cdots\fktC_n(N(\fktG_n))$ in $L^2$. Consequently, we obtain  $\B\subset\A$ and $\A$ is dense in $\Lonetwo$.

Since $\mathds{1}_{\A}$ is the unit in $\A$
and $\A$ is dense in $\Lonetwo$, we obtain $\mathds{1}_{\A}=\mathds{1}_{\oneset}$.
\end{proof}

\begin{remark}
Note that we get the densness of $\A$ in $\Lonetwo$ from the fact that the $N(\fktG_j)$ are bounded so that the cutting off functions $\fktC_j$ have no effect in the end.
\end{remark}

\begin{prop}\label{xNcontin}
The map 
$$\twoelem\circ N:\Cc\too\IC,~\fktG\mapsto\twoelem(N(\fktG)),$$ 
is linear and continuous for all $\fktG\in\G$ and each $\twoelem\in\twoset$.
\end{prop}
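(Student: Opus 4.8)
The plan is to handle linearity and continuity separately, since the former is immediate and the latter carries all the content. For linearity: the map $N$ is linear by the definition of a spatial process, and each $\twoelem\in\twoset=\hat{\A}$ is by definition a linear functional on $\A$. Since $N(\fktG)\in\A$ for every $\fktG\in\Cc$, the map $\twoelem\circ N$ is a composition of linear maps and hence linear. The remaining task is therefore to establish continuity of $\twoelem\circ N\colon\Cc\too\IC$ for each fixed $\twoelem\in\twoset$.

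For continuity I would exploit the inductive limit structure of $\Cc$. Because $\Cc$ carries the locally convex limit topology induced by the embeddings $\Cck\hookrightarrow\Cc$, a linear map out of $\Cc$ is continuous if and only if its restriction to each $\Cck$ is continuous, for every compact $\subG\subset\G$. On $\Cck$ we have the Banach space structure given by the sup norm $\absinf{\cdot}$, so it suffices to produce, for each such $\subG$, a constant $C_{\subG}$ with $\abs{\twoelem(N(\fktG))}\leq C_{\subG}\,\absinf{\fktG}$ for all $\fktG\in\Cck$.

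This bound splits into two estimates. First, each $\twoelem$ is a nonzero multiplicative linear functional on the commutative $C^*$-algebra $\A$, i.e.\ a character; by Gelfand theory such functionals are contractive, so $\abs{\twoelem(a)}\leq\absinf{a}$ for every $a\in\A$. Applying this to $a=N(\fktG)$ reduces the problem to bounding $\absinf{N(\fktG)}$ in terms of $\absinf{\fktG}$, that is, to showing that $N|_{\Cck}\colon\Cck\too\Loneinf$ is a bounded operator.

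The main obstacle is precisely this $L^{\infty}$-boundedness of $N$ on $\Cck$, because the definition of a spatial process only provides continuity of $N$ into $\Lonetwo$, not into $\Loneinf$. To close this gap I would invoke the closed graph theorem for the linear map $N|_{\Cck}\colon\Cck\too\Loneinf$ between Banach spaces (note that $N(\fktG)\in\Loneinf$ by the hypothesis of the theorem, so the map is well defined, and $\Cck$ is complete under $\absinf{\cdot}$). Suppose $\fktG_k\too\fktG$ in $\Cck$ and $N(\fktG_k)\too g$ in $\Loneinf$. On the one hand, continuity of $N\colon\Cc\too\Lonetwo$ together with the continuity of the embedding $\Cck\hookrightarrow\Cc$ gives $N(\fktG_k)\too N(\fktG)$ in $L^2$; on the other hand, since $\mone$ is a probability measure, $\absinf{\cdot}$-convergence implies $\abstwo{\cdot}$-convergence, so $N(\fktG_k)\too g$ in $L^2$ as well. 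Uniqueness of $L^2$-limits forces $g=N(\fktG)$, hence the graph of $N|_{\Cck}$ is closed and the operator is bounded. Combining the two estimates yields $\abs{\twoelem(N(\fktG))}\leq\absinf{N(\fktG)}\leq C_{\subG}\,\absinf{\fktG}$, which establishes continuity on each $\Cck$ and therefore, by the universal property of the inductive limit topology, continuity of $\twoelem\circ N$ on all of $\Cc$.
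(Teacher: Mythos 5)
Your proposal is correct and follows essentially the same route as the paper: linearity is immediate, continuity of each character $\twoelem$ comes from Gelfand theory, and the key step is the closed graph theorem applied to $N|_{\Cck}\colon\Cck\too\Loneinf$, using the given continuity of $N$ into $\Lonetwo$. The only (harmless) variation is in how you identify the limit in the closed-graph step: you use that $\absinf{\cdot}$-convergence implies $\abstwo{\cdot}$-convergence on a probability space and invoke uniqueness of $L^2$-limits, whereas the paper extracts from the $L^2$-convergent sequence a subsequence converging almost surely and compares it with the $\Loneinf$-limit; both arguments are valid.
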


\begin{proof}
The linearity follows directly from the linearity of $N$ and $\twoelem$. Since we already know that each $\twoelem\in \twoset$ is continuous it remains to show that $N$ is continuous as a map from $\Cc$ to $\A \sub \Loneinf$. This follows from the closed graph theorem and the continuity of $N$ as a map from $\Cc$ to $\Lonetwo$:

It suffices to show that the restriction of $N$ to  $\Cck$ is continuous for each compact subset $\subG$ of $\G$ (by definition of the topology on $\Cck$) and thus the Banach space $(\Cck,\absinf{\cdot})$. We will show that the map $N$ restricted to $\Cck$ is closed.

Let the sequences $(\fktGseq{n})_n\subset\Cck$ with $\fktG_n\too \fktG, n\to\infty$, and $N(\fktG_n)\too f, n\to\infty$, in $\Loneinf$ be given.\\ 
We already know that $N:\Cc \too \Lonetwo$ is a continuous map. This implies that $\Nrist:\Cck\too\Lonetwo$ is continuous as well and $N(\fktGseq{n})$ converges to $N(\fktG)$ in $\Lonetwo$. Thus there exists a sub-sequence $N(\fktGseq{n_k})$ with
$N(\fktGseq{n_k})\too N(\fktG), k\to\infty$, almost surely.\\
As $N(\fktG_n)$ converges to $f$ in $\Loneinf$, we also have the convergence of $N(\fktGseq{n_k})$ to $f$ in $\Loneinf$  and thus $f=N(\fktG)$, almost surly. Ergo, the map $\Nrist:\Cck\too\Loneinf$ is closed and by the closed graph theorem the statement follows.
\end{proof}

By the  Riesz-Markov  theorem, any continuous linear functional on $\Cc$ is induced by a measure. Thus, there exists to any 
$\twoelem\in\twoset$ a unique inner regular measure $\threelem$ on $G$ with 
$$\twoelem(N(\fktG))=\int_{G}\fktG~ d\threelem=:\threelem(\fktG)\text{ for all } \fktG\in\Cc$$ 
by the previous proposition.
We define 
$$\threeset\defas\{\threelem:\twoelem\in\twoset\}\subset\MG$$
equipped with the vague topology.
Furthermore, we consider the map $\identif$ given by 
$$\identif:\twoset\too\threeset,~\twoelem\mapsto\threelem ~\text{ and }~ \identif(\twoelem)(\fktG)=\threelem(\fktG)=\twoelem(N(\fktG))$$

holds. In the next proposition we will show some useful properties of these objects. In particular we will show that the $\threelem$ are translation bounded.

\begin{prop}\label{idenifIso}
The following holds:
\begin{enumerate}[(a)]
\item The measure $\threelem$ is translation bounded for all $\twoelem\in\twoset$.
\item The map $\identif$ is a homeomorphism and $\threeset$ is a compact space.
\item For $\twoelem\in\twoset$ and $\g\in\G$ the map $\twoelem_t:=\twoelem\circ T_{\g}: \A\too\IC$ belongs to $\twoset$ and\\ 
$$\threelem\circ\trans_{\g}=\threelemvari{\twoelem_t}$$
holds. In particular the space $\threeset$ is invariant under translation, i.e. $\threelem\circ\trans_{\g}$ belongs to $\threeset$ for all $\twoelem\in\twoset$ and $\g\in\G$.
\end{enumerate}
\end{prop}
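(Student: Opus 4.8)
The plan is to prove the three parts in order, using the covariance relation $N \circ \trans_{\g} = T_{\g} \circ N$ as the central tool, together with two standard facts from the Gelfand theory recalled above: every $\twoelem \in \twoset = \Ahat$ is contractive, i.e. $\abs{\twoelem(g)} \le \absinf{g}$ for $g \in \A$, and two characters that agree on a generating subset of $\A$ coincide. At the outset I would record that $T_{\g}$ restricts to a unital, multiplicative, $\absinf{\cdot}$-isometric self-map of $\A$: multiplicativity and unitality are immediate from $T_{\g}f = f \circ \act_{-\g}$, while the isometry property follows from the $\act$-invariance of $\mone$, which makes $\act_{-\g}$ measure preserving and hence preserves the essential supremum. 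These structural properties of $T_{\g}$ underlie both (a) and (c).

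For part (a), I would fix $\twoelem \in \twoset$ and $\fktG \in \Cc$, and using covariance followed by contractivity of $\twoelem$ estimate
$$\abs{\threelem(\trans_{\g}\fktG)} = \abs{\twoelem(N(\trans_{\g}\fktG))} = \abs{\twoelem(T_{\g}N(\fktG))} \le \absinf{T_{\g}N(\fktG)} = \absinf{N(\fktG)}.$$
The decisive point is that the final bound $\absinf{N(\fktG)}$ is finite (by the hypothesis $N(\fktG) \in \Loneinf$) and, crucially, does not depend on $\g$. Hence $\g \mapsto \threelem(\trans_{\g}\fktG)$ is bounded, which is exactly the definition of $\threelem \in \MGtb$.

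For part (b), continuity of $\identif$ reduces to continuity of $\twoelem \mapsto \threelem(\fktG) = \twoelem(N(\fktG))$ for each fixed $\fktG$, and this is precisely the continuity of the Gelfand transform of $N(\fktG)$, which is built into the topology of $\twoset$; since the vague topology on $\threeset$ is generated by the evaluations $\nu \mapsto \nu(\fktG)$, this yields continuity of $\identif$. For injectivity I would argue that $\threelem = \threelemvari{y}$ forces $\twoelem$ and $y$ to agree on all $N(\fktG)$; as the set $\{N(\fktG) : \fktG \in \Cc\}$ is self-adjoint (because $\close{N(\fktG)} = N(\close{\fktG})$) and generates $\A$, the two characters agree on the dense subalgebra it spans, and therefore, by continuity, on all of $\A$, so $\twoelem = y$. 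Since $\twoset = \Ahat$ is compact and $\threeset \subset \MG$ is Hausdorff for the vague topology, the continuous bijection $\identif$ is automatically a homeomorphism and $\threeset = \identif(\twoset)$ is compact as the continuous image of a compact space.

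For part (c), I would first check that $\twoelem_{\g} := \twoelem \circ T_{\g}$ lies in $\twoset$: it is linear and multiplicative as a composition of maps with these properties, and it is nonzero because $\twoelem_{\g}(\mathds{1}_{\A}) = \twoelem(\mathds{1}_{\A}) = 1$, so it is a character. The claimed identity then follows from the same covariance computation as in (a): for every $\fktG \in \Cc$,
$$(\threelem \circ \trans_{\g})(\fktG) = \threelem(\trans_{\g}\fktG) = \twoelem(T_{\g}N(\fktG)) = (\twoelem \circ T_{\g})(N(\fktG)) = \threelemvari{\twoelem_{\g}}(\fktG),$$
whence $\threelem \circ \trans_{\g} = \threelemvari{\twoelem_{\g}}$ as measures; the invariance of $\threeset$ under translation is then immediate, since $\twoelem_{\g} \in \twoset$. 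I expect the main obstacle to be conceptual rather than computational: recognising that translation boundedness in (a) hinges on the uniformity in $\g$ of the bound, which in turn rests on $T_{\g}$ acting as an isometry of $\A$. Once the isometric, unital, multiplicative nature of $T_{\g}$ on $\A$ is secured, all three parts reduce to short applications of the covariance relation and of the elementary properties of characters.
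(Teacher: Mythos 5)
Your proof is correct and follows essentially the same route as the paper: the same covariance estimate $\abs{\twoelem(T_{\g}N(\fktG))}\le\absinf{N(\fktG)}$ for (a), the same weak-$*$/vague-topology continuity and generation-based injectivity argument for (b), and the same covariance computation for (c). You even close two small gaps the paper leaves implicit, namely that a continuous bijection from the compact $\twoset$ onto the Hausdorff $\threeset$ is automatically a homeomorphism, and that $\twoelem\circ T_{\g}$ is nonzero (unital), which is what the definition of $\Ahat$ actually requires.
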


\begin{proof}
\begin{enumerate}[(a)]
\item
By the definition of translation bounded measures it suffices to show that\\
$\threelem(\trans_{\g}\fktG)$ is bounded in $\g$ for all $\fktG\in\Cc$. By the continuity of $\twoelem$ and $N(\fktG)\in\Loneinf$, we obtain
$$
\abs{\threelem(\trans_{\g}\fktG)} 
= \abs{\twoelem(N(\trans_{\g}\fktG))}\\
\leq\|\twoelem\|\absinf{N(\trans_{\g}\fktG)}\\
= \|\twoelem\|\absinf{T_{\g}N(\fktG)},\\
= \|\twoelem\|\absinf{N(\fktG)},
$$
which is independent of $\g\in\G$.
\item We first show the continuity of $\identif$. For this purpose we have to show that $\threelemvari{\twoelem_i}$ converges to $\threelem$ for all nets $(\twoelem_i)_{i\in I}$ with $\twoelem_i\to\twoelem$. We have the weak-$*$-topology on $\Ahat$. This is the smallest topology such that all functionals 
$$(\a,\cdot):\Ahat\too\IC,~\ahat\mapsto(\a,\ahat):=\ahat(\a)$$ 
are continuous and therefore every sequence $(\a,\ahat_i)_{i\in I}$ converges to $(\a,\ahat)$ if $\ahat_i\to\ahat$ in $\Ahat$. 
Thus we obtain the convergence $\twoelem_i(\a)\to\twoelem(\a)$ for all $\a\in\A$. 
It follows that $\twoelem_i(N(\fktG))\to\twoelem(N(\fktG))$, as $\A$ is generated by the $N((\fktG)),~\fktG\in\Cc$ and $\charfkt{\A}$. Hence $\threelemvari{\twoelem_i}$ converges vaguely to $\threelem$.

Given the continuity of $\identif$, the compactness of $\threeset$ follows as 
$\twoset$ is compact.

By the definition of $\threeset$ the map $\identif$ is onto. It remains to show that $\identif$ is one-to-one. Let $\twoelem_1,\twoelem_2\in\twoset$ such that $\twoelem_1\neq \twoelem_2$. Then there exists a $\fktG\in\Cc$ with $\twoelem_1(N(\fktG))\neq\twoelem_2(N(\fktG))$. This implies $\threelemvari{\twoelem_1}(\fktG)\neq\threelemvari{\twoelem_2}(\fktG).$

\item The functional $\twoelem_t\defas\twoelem\circ T_{\g}$ is linear, multiplicative and continuous. The linearity and the continuity follow directly from the linearity and continuity of $T_{\g}$ and $\twoelem$. Regarding the multiplicativity, we have  
$$\twoelem\circ T_{\g}(fg)
=\twoelem((fg)\circ\act_{-\g})
=\twoelem(f\circ\act_{-\g})~\twoelem(g\circ\act_{-\g})
=\big(\twoelem\circ T_{\g}(f)\big)\big(\twoelem\circ T_{\g}(g)\big)$$
for all $f,g\in\A$.
Furthermore we have 
$$
\threelem(\trans_{\g}\fktG)
=\twoelem(N(\trans_{\g}\fktG))
=\twoelem\circ T_{\g}\circ N(\fktG)
=\twoelem_{\g}(N(\fktG))
=\threelemvari{\twoelem_t}(\fktG)
$$
\end{enumerate}
This finishes the proof.
\end{proof}

Now we can define the actions
$$\acthree: G\times\threeset\too\threeset ~\text{ of } G \text{ on } \threeset \text{ via }~\acthree_t(\threelem)(\fktG):=\threelem(\trans_{-\g}\fktG)$$
and
$$\actwo: G\times\twoset\too\twoset ~\text{ of } G \text{ on } \twoset \text{ via }~\actwo_{\g}(\twoelem)=:\twoelem\circ T_{-\g}=\threelemvari{\twoelem_t}$$
We derive
\begin{align*}
\acthree_{-\g}(\threelem)(\fktG)
=\threelemvari{\twoelem_t}(\fktG)
=\actwo_{-\g}(\twoelem)(N\fktG)
=\identif(\actwo_{-\g}(\twoelem))(\fktG).
\tag{$\blacktriangle$}
\end{align*}
At this point we have the identification of $\twoset$ with the set $\threeset$ whose elements are translation bounded measures. The next step is to define the measure $\mthree$ and a map 
$${\Nthree:\Cc\too\Lthree},$$ 
which satisfies the desired properties, such that $(\Nthree,\threeset,\mthree,\Three)$ is a spatial process. 
Here $\Three$ is the Koopman representation on $\Lthree$ induced by $\acthree$.

We already have the map 
$$\Nthree:\Cc\too C(\threeset)\text{ given by }\Nthree(\fktG)(\threelem)\asdef\threelem(\fktG)$$ 
for $\threelem\in\threeset$, 
which maps indeed into $C(\threeset)$ by the definition of the vague topology.
Furthermore $\Nthree$ is compatible with complex conjugation. This follows by the Gelfand theory and the properties of $N$:
\begin{align*}
\close{\Nthree(\fktG)(\threelem)}
&=\close{\threelem(\fktG)}\\
&=\close{\twoelem( N(\fktG))}\\
(\text{Gelfand theory})~&=\twoelem(\close{N(\fktG)})\\
(\text{property of $N$})~&=\twoelem(N(\close{\fktG}))\\
&=\threelem(\close{\fktG})\\
&=\Nthree(\close{\fktG})(\threelem)
\end{align*}

To obtain the measure $\mthree$ we use the Gelfand theory of C*-algebras again, more precisely the Gelfand representation.
This representation is the isometric-*-isomorphism 
$\gelf:\A\too C(\twoset)$ with $\gelf(f)(\twoelem)=\twoelem(f)$
as $\twoset=\Ahat$
and induces a measure $\mtwo$ on $\twoset$ via 
$$\int_{\twoset}\ftwo\,d\mtwo=\int_{\oneset}\gelf^{-1}(\ftwo)\,d\mone~\mbox{ for }~ \ftwo\in C(\twoset).$$
The measure $\mtwo$ is a probability measure as 
$$
\mtwo(\twoset)
=\int_{\twoset} \I_{\twoset}\,d\mtwo
=\int_{\oneset} \gelf^{-1}(\I_{\twoset})\,d\mone
=\int_{\oneset} \I_{\oneset} \,d\mone
=\mone(\oneset).
$$
shows.
Since the algebra $\A$ is dense in $\Lonetwo$ the Gelfand representation
can be extended to a unitary operator $U:\Lonetwo\too\Ltwo$ with $U(\I_{\oneset})=\I_{\twoset}$ and we obtain for $\ftwo\in\Ltwo$
$$\int_{\twoset} \ftwo\,d\mtwo
=\langle f, \I_{\twoset} \rangle
=\langle U^{-1}(f), U^{-1}(\I_{\twoset})\rangle
=\langle U^{-1}(f), \I_{\oneset}\rangle
=\int_{\oneset} U^{-1}(\ftwo)\,d\mone.$$
In a similar way, the measure $\mthree$ on $\threeset$ is induced by the homeomorphism $\identif:\twoset\too\threeset$ via
$$\int_{\threeset} f\,d\mthree
=\int_{\twoset} f\circ\identif\,d\mtwo~\mbox{ for }~ f\in L^{1}(\threeset,\mthree).$$
Thus, we find
$$\int_{\threeset}f\,d\mthree
=\int_{\oneset}U^{-1}(\fthree\circ\identif)\,d\mone~\mbox{ holds for }~ \fthree\in\Lthree.$$
In particular we have
$$
\mthree(\threeset)
=\int_{\threeset} \I_{\threeset}\,d\mthree
=\int_{\oneset} U^{-1}(\I_{\threeset}\circ\identif)\,d\mone
=\int_{\oneset} U^{-1}(\I_{\twoset})\,d\mone
=\int_{\oneset} \I_{\oneset} \,d\mone
=1.
$$
and $\mthree$ is a probability measure as well.

It remains to show that the induced measure $\mthree$ is a translation invariant probability measure, whence $C(\threeset)\subset\Lthree$ and $\Nthree:\Cc\too \Lthree$ follows.

\begin{prop}\label{invMeas}
The measure $\mtwo$ is invariant on $\twoset$ under the action $\actwo$ and the measure $\mthree$ is invariant on $\threeset$ under the action $\acthree$.
\end{prop}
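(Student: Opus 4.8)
The plan is to establish the two invariance statements separately: I will derive the $\actwo$-invariance of $\mtwo$ directly from the $\act$-invariance of $\mone$ through the Gelfand transform, and then obtain the $\acthree$-invariance of $\mthree$ as a formal consequence of the first, using the intertwining relation $(\blacktriangle)$ together with the fact that $\mthree$ is the push-forward of $\mtwo$ under $\identif$.

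For the first assertion it suffices, since $\mtwo$ is a Borel probability measure on the compact space $\twoset$, to verify $\int_{\twoset}\ftwo\circ\actwo_{\g}\,d\mtwo=\int_{\twoset}\ftwo\,d\mtwo$ for every $\ftwo\in C(\twoset)$. Writing $\ftwo=\gelf(f)$ with $f=\gelf^{-1}(\ftwo)\in\A$ and unwinding the definitions $\actwo_{\g}(\twoelem)=\twoelem\circ T_{-\g}$ and $\gelf(f)(\twoelem)=\twoelem(f)$, the first step is to establish the duality identity
\[
\gelf^{-1}(\ftwo\circ\actwo_{\g})=T_{-\g}\big(\gelf^{-1}(\ftwo)\big),
\]
which records that the action $\actwo$ on $\twoset=\Ahat$ is dual to the Koopman operator $T$ on $\A$; concretely one computes $(\ftwo\circ\actwo_{\g})(\twoelem)=\twoelem(T_{-\g}f)=\gelf(T_{-\g}f)(\twoelem)$ for all $\twoelem\in\twoset$. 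Inserting this into the defining relation $\int_{\twoset}\ftwo\,d\mtwo=\int_{\oneset}\gelf^{-1}(\ftwo)\,d\mone$ reduces the claim to
\[
\int_{\oneset}T_{-\g}\big(\gelf^{-1}(\ftwo)\big)\,d\mone=\int_{\oneset}\gelf^{-1}(\ftwo)\,d\mone.
\]
As $T_{-\g}h=h\circ\act_{\g}$ and $\mone$ is $\act$-invariant, this equality is immediate, which proves the invariance of $\mtwo$ under $\actwo$.

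For the second assertion I would use that $\mthree$ is by construction the push-forward of $\mtwo$ under the homeomorphism $\identif$, that is, $\int_{\threeset}f\,d\mthree=\int_{\twoset}f\circ\identif\,d\mtwo$. For $f\in C(\threeset)$ and $\g\in\G$ one then computes
\[
\int_{\threeset}f\circ\acthree_{\g}\,d\mthree=\int_{\twoset}f\circ(\acthree_{\g}\circ\identif)\,d\mtwo=\int_{\twoset}(f\circ\identif)\circ\actwo_{\g}\,d\mtwo,
\]
where the last equality uses the intertwining relation $(\blacktriangle)$, which gives $\acthree_{\g}\circ\identif=\identif\circ\actwo_{\g}$. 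Applying the already-established $\actwo$-invariance of $\mtwo$ to the function $f\circ\identif\in C(\twoset)$ then yields $\int_{\twoset}(f\circ\identif)\circ\actwo_{\g}\,d\mtwo=\int_{\twoset}f\circ\identif\,d\mtwo=\int_{\threeset}f\,d\mthree$, so that $\mthree$ is $\acthree$-invariant.

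I expect the main obstacle to be the first step, namely checking the duality identity $\gelf^{-1}(\ftwo\circ\actwo_{\g})=T_{-\g}(\gelf^{-1}(\ftwo))$ with the correct signs. Everything depends on matching three layers of structure --- the Koopman operator $T_{\g}$ on $\A$, the dual action $\actwo_{\g}$ on the spectrum $\twoset=\Ahat$, and the Gelfand transform $\gelf$ connecting them --- and on tracking the conventions $T_{\g}f=f\circ\act_{-\g}$ and $\actwo_{\g}(\twoelem)=\twoelem\circ T_{-\g}$. Once this identity is secured, both invariance statements follow from the $\act$-invariance of $\mone$ by purely formal manipulations, and the second requires no analytic input beyond the intertwining relation $(\blacktriangle)$ recorded above.
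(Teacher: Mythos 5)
Your proposal is correct and takes essentially the same route as the paper: you prove the same duality identity (the paper writes it as $\gelf(T_{-\g}f)=\gelf(f)\circ\actwo_{\g}$, you write the equivalent $\gelf^{-1}(\ftwo\circ\actwo_{\g})=T_{-\g}\bigl(\gelf^{-1}(\ftwo)\bigr)$), use the defining relation between $\mtwo$ and $\mone$ to reduce the $\actwo$-invariance of $\mtwo$ to the $\act$-invariance of $\mone$, and then obtain the $\acthree$-invariance of $\mthree$ by pushing forward through $\identif$ via the intertwining relation $(\blacktriangle)$, exactly as the paper does in its steps $(\blacktriangledown)$ and the subsequent computation.
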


\begin{proof}
We use the invariance of the measure $\mone$ on $\oneset$ and the fact that the measure $\mtwo$ is induced by the Gelfand representation $\gelf$ to show the invariance of $\mtwo$ on $\twoset$. 

Let $f\in \A$ and $f'=\gelf(f)\in C(\twoset)$. From the properties of $\gelf$ and the definition of $\actwo $ we obtain 
$$
\gelf(T_{-\g}f)(\twoelem)
= \twoelem(T_{-\g}f)
=\actwo_{\g}(\twoelem)(f)
= \gelf(f)(\actwo_{\g}(\twoelem))
= f'(\actwo_{\g}(\twoelem))
= f'\circ\actwo_{\g} (\twoelem)
$$
%
%
Hence we have
\begin{align*}
\int_{\twoset}(f'\circ\actwo_{\g})(\twoelem)\,d\mtwo
&=\int_{\twoset}\gelf(T_{-\g}f)(\twoelem)\,d\mtwo\\
&=\int_{\oneset}\gelf^{-1}(\gelf(T_{-\g}f))(\twoelem)\,d\mone\\
&=\int_{\oneset}T_{-\g}(f(\onelem))\,d\mone\\
&=\int_{\oneset}f(\onelem)\,d\mone\\
&=\int_{\twoset}\gelf (f)(\twoelem))\,d\mtwo\\
&=\int_{\twoset}f'(\twoelem)\,d\mtwo
\tag{$\blacktriangledown$}
\end{align*}

It follows for $f\in C(\threeset)$ that
$$
\int_{\threeset}f\circ\acthree_{\g}\,d\mthree\\
=\int_{\twoset}f\circ\acthree_{\g}\circ \identif\,d\mtwo\\
\overset{(\blacktriangle)}=\int_{\twoset}f\circ \identif\circ\actwo_{\g}\,d\mtwo\\
\overset{(\blacktriangledown)}=\int_{\twoset}f\circ \identif\,d\mtwo\\
=\int_{\threeset}f\,d\mthree\\
$$
and thus the invariance of the measure $\mthree$.
\end{proof}

By the invariance of the measures $\mtwo$ and $\mthree$ we can define the Koopman operators $\Two_{\g}$ and $\Three$ induced by the actions $\actwo$ respectively $\acthree$.
With the consideration on the actions $\actwo$ and $\acthree$ from above we derive
$$\Nthree(\trans_{\g}\fktG)(\threelem)
=\threelem(\trans_{\g}\fktG)
=\acthree_{-\g}(\threelem)(\fktG)
=\Nthree(\fktG)(\acthree_{-\g}(\threelem))
=\Three_{\g}(\Nthree(\fktG))(\threelem).$$
 
So far we have shown that $\N_{\identif}=(\Nthree,\threeset,\mthree,\Three)$ is a spatial process of translation bounded measures and hence a TBMDS.
The last thing we have to do is to construct a unitary map $\unit:\Lonetwo\too\Lthree$ such that $\unit$ is a spatial isomorphism between $\N$ and $\N_{\identif}$.

We start with the following statements which apply to the Koopman operators. 
%
%
%
\begin{prop}\label{commutU}
For the unitary operator $U$ and the Koopman operators $\Two_{\g}$ and $\Three$ we have for all $\g\in\G$ the following:
\begin{enumerate}[(a)]
\item $U\circ T_{\g}=\Two_{\g}\circ U$.

\item $(\Three_{\g}\fthree)\circ\identif=\Two_{\g}(f\circ\identif)$. 
\end{enumerate}
\end{prop}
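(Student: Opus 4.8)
The plan is to verify both identities first on a suitable dense subset, where they collapse to intertwining relations already established, and then extend them by continuity, using that $U$, $\Two_{\g}$, $\Three_{\g}$ and the composition map $f\mapsto f\circ\identif$ are all bounded operators.

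For (a) I would work on the dense subalgebra $\A\subset\Lonetwo$, on which $U$ coincides with the Gelfand representation $\gelf$. The decisive computation has in effect already been carried out in the proof of Proposition~\ref{invMeas}: for $f\in\A$ one has
\[
\gelf(T_{-\g}f)(\twoelem)=\twoelem(T_{-\g}f)=\actwo_{\g}(\twoelem)(f)=\gelf(f)(\actwo_{\g}(\twoelem)),
\]
that is, $\gelf(T_{-\g}f)=\gelf(f)\circ\actwo_{\g}$. Since $\Two_{\g}$ is the Koopman operator $\Two_{\g}h=h\circ\actwo_{-\g}$, replacing $\g$ by $-\g$ yields $\gelf(T_{\g}f)=\gelf(f)\circ\actwo_{-\g}=\Two_{\g}(\gelf(f))$, i.e. $U(T_{\g}f)=\Two_{\g}(Uf)$ for all $f\in\A$. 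Because $\A$ is dense in $\Lonetwo$ by Lemma~\ref{AlgDense} and the three operators involved are bounded, the identity $U\circ T_{\g}=\Two_{\g}\circ U$ propagates to all of $\Lonetwo$.

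For (b) the essential ingredient is the intertwining relation $(\blacktriangle)$, which reads $\acthree_{-\g}\circ\identif=\identif\circ\actwo_{-\g}$. For $f\in C(\threeset)$ I would simply compute, using $\Three_{\g}f=f\circ\acthree_{-\g}$ and $\Two_{\g}g=g\circ\actwo_{-\g}$,
\[
(\Three_{\g}f)\circ\identif=f\circ\acthree_{-\g}\circ\identif\overset{(\blacktriangle)}{=}f\circ\identif\circ\actwo_{-\g}=\Two_{\g}(f\circ\identif).
\]
Since $\identif$ is a homeomorphism and $\mthree$ is the push-forward of $\mtwo$ under $\identif$, the map $f\mapsto f\circ\identif$ is a unitary from $\Lthree$ onto $\Ltwo$; as $C(\threeset)$ is dense in $\Lthree$ and both sides depend continuously on $f$, the identity holds for every $f\in\Lthree$.

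I do not anticipate a genuine obstacle here: once the actions $\actwo$, $\acthree$ and their Koopman operators are available, both statements are bookkeeping around the two intertwining relations already derived. The only points requiring care are the correct placement of the sign of $\g$ when passing between an action and its associated Koopman operator, and the routine density–continuity arguments used to pass from $\A$ (respectively $C(\threeset)$) to all of $\Lonetwo$ (respectively $\Lthree$).
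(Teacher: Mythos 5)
Your proposal is correct and follows essentially the same route as the paper: part (a) rests on the identity $\gelf(T_{\g}f)(\twoelem)=\actwo_{-\g}(\twoelem)(f)=\Two_{\g}(\gelf(f))(\twoelem)$ on the dense subalgebra $\A$ (the paper verifies it on the generators $N(\fktG)$ and then passes to $U$ as the extension of $\gelf$, which is the same density argument you make explicit), and part (b) is exactly the paper's pointwise computation using the intertwining relation $(\blacktriangle)$, $\acthree_{-\g}\circ\identif=\identif\circ\actwo_{-\g}$. Your handling of the sign conventions and the extension from $\A$ (respectively $C(\threeset)$) to the full $L^2$-spaces is accurate, so there is nothing to correct.
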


\begin{proof} (a)\,
We start to show that $\gelf$ commutes with the Koopman operator $T_{\g}$.
\begin{align*}
\gelf(T_{\g}(N(\fktG)))(\twoelem))
=\twoelem(T_{\g}(N(\fktG))
=\actwo_{-\g}(\twoelem)(N(\fktG))
=\gelf(N(\fktG))(\actwo_{-\g}(\twoelem))
=\Two_{\g}(\gelf(N(\fktG)))(\twoelem).
\end{align*}

Note that $\gelf^{-1}$ commutes with $\Two_{\g}$ as well, since $\gelf$ is an algebren isomorphism. 
Hence, the same is true for $U$ and $U^{-1}$, as $\gelf$ is the restriction of $U$  to $\A$.

(b)\, For the operators $\Two_{\g}$ and $\Three_{\g}$ we have
$$(\Three_{\g}\fthree)(\threelem)
=\fthree\circ\acthree_{-\g}(\threelem)
\overset{(\blacktriangle)}=(\fthree\circ\identif)(\actwo_{-\g}(\twoelem))
=\Two_{\g}(\fthree\circ\identif)(\twoelem)
$$
which finishes the proof.
\end{proof}

We continue with some considerations concerning the map
$$\Identif:\Lthree\too\Ltwo\text{ defined as }f\mapsto f\circ\identif.$$

\begin{prop}\label{eigIdent} 
The following holds:
\begin{enumerate}[(a)]
\item For any $\fktG\in\Cc$, we have $\Identif(\Nthree(\fktG))= \gelf(N(\fktG))$.
\item $\Identif$ is a unitary map.\\
\end{enumerate}
\end{prop}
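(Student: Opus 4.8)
The plan is to prove both parts by unwinding definitions, relying on two facts established above: $\identif$ is a homeomorphism (Proposition \ref{idenifIso}(b)), and $\mthree$ is by construction the image measure of $\mtwo$ under $\identif$, i.e. $\int_{\threeset}f\,d\mthree=\int_{\twoset}f\circ\identif\,d\mtwo$.

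For part (a) I would fix $\fktG\in\Cc$ and evaluate both sides at an arbitrary $\twoelem\in\twoset$. Directly from the definitions of $\Identif$, $\Nthree$ and $\identif$,
$$\Identif(\Nthree(\fktG))(\twoelem)=\Nthree(\fktG)(\identif(\twoelem))=\Nthree(\fktG)(\threelem)=\threelem(\fktG).$$
The defining relation $\threelem(\fktG)=\twoelem(N(\fktG))=\gelf(N(\fktG))(\twoelem)$ then shows that $\Identif(\Nthree(\fktG))$ and $\gelf(N(\fktG))$ agree pointwise on $\twoset$, hence as elements of $\Ltwo$.

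For part (b) I would argue in three steps. First, linearity of $\Identif$ is immediate, since precomposition with the fixed map $\identif$ respects sums and scalar multiples. Second, $\Identif$ is norm-preserving: for $\fthree\in\Lthree$ the change-of-variables formula defining $\mthree$ yields
$$\int_{\twoset}\abs{\Identif(\fthree)}^2\,d\mtwo=\int_{\twoset}\abs{\fthree\circ\identif}^2\,d\mtwo=\int_{\threeset}\abs{\fthree}^2\,d\mthree,$$
so that $\Identif$ is an isometry from $\Lthree$ into $\Ltwo$. Third, to obtain surjectivity I would exhibit an inverse: since $\identif^{-1}\colon\threeset\too\twoset$ is again a homeomorphism, the composition operator $g\mapsto g\circ\identif^{-1}$ maps $\Ltwo$ into $\Lthree$ (by the same change of variables applied to $\identif^{-1}$) and is a two-sided inverse of $\Identif$. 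A linear isometry admitting a bounded inverse is unitary, which completes part (b).

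I do not expect a substantial obstacle here, as the real work was already done in proving that $\identif$ is a homeomorphism and in constructing $\mthree$ as the pushforward of $\mtwo$. The only point deserving care is that $\Identif$ must be well defined on $L^2$-equivalence classes rather than merely on functions: because $\mthree=\identif_{*}\mtwo$, the $\mthree$-null sets pull back along $\identif$ to $\mtwo$-null sets and vice versa, so both $\Identif$ and its inverse descend to equivalence classes. This compatibility is exactly what the isometry identity in the second step verifies.
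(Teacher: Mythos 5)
Your proposal is correct and follows essentially the same route as the paper: part (a) is the identical pointwise computation unwinding $\Identif$, $\Nthree$, $\identif$ and the Gelfand representation, and part (b) simply fills in the details (isometry via the pushforward formula, the inverse composition operator, well-definedness on equivalence classes) that the paper dismisses as ``clear from the definition of $\mthree$'' given that $\identif$ is a homeomorphism. No gap; your version is just more explicit.
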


\begin{proof}(a)\,
A direct computation gives
$$\Identif(\Nthree(\fktG))(\twoelem)
=(\Nthree(\fktG)\circ \identif)(\twoelem)
=\Nthree(\fktG)(\threelem)
=\threelem(\fktG)=\twoelem(N(\fktG))
=\gelf(N(\fktG))(\twoelem)$$ for $\twoelem\in\twoset$.

(b)\, In Proposition \ref{idenifIso} we have already shown that $\identif:\twoset\too\threeset,~\twoelem\mapsto\threelem$, is a homeomorphism, thus the statement is clear from the definition of $\mthree$.
\end{proof}

We set
$$\Uthree\defas\Identif^{-1}\circ\Unit:\Lonetwo\too\Lthree$$
and obtain $\Uthree^{-1}\defas \Unit^{-1}\circ\Identif:\Lthree\too\Lonetwo$.
Now we have to check that 
$U_{\identif}$ satisfies the properties mentioned in Definition \ref{EquiProc}. This is straightforward. The details are provided in the next lemma.

\begin{lemma}\label{SpatialIso}
Let $(N,\oneset,\mone,T)$ be the spatial process from Theorem \ref{ThmEquiTBMDS}. Let $(\Nthree,\threeset,\mthree,\Three)$ be the spatial process over a set of translation bounded measures constructed above. Then $\Uthree$ is a unitary operator between $\Lonetwo$ and $\Lthree$ which satisfies the following conditions: 
\begin{itemize}
\item $\Uthree\circ N=\Nthree$.
\item $\Uthree \circ T_{\g}= \Three_{\g}\circ\Unit$ for all $\g\in G$.
\item $\Uthree(fg)=\Uthree(f)\Uthree(g)$ as well as $\Uthree^{-1}(f',g')=\Uthree^{-1}(f')\Uthree^{-1}(g')$\\
for all $f,g\in\Loneinf$ and all $f',g'\in \Lthreeinf$.
\end{itemize}
Thus $(N,\oneset,\mone,T)$ and $(\Nthree,\threeset,\mthree,\Three)$ are equivalent.
\end{lemma}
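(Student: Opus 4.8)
The plan is to verify the three requirements of Definition \ref{EquiProc} for $\Uthree=\Identif^{-1}\circ\Unit$ one at a time, feeding on the propositions already in place. Unitarity is immediate: $\Unit\colon\Lonetwo\too\Ltwo$ is unitary as the extension of the Gelfand representation, $\Identif\colon\Lthree\too\Ltwo$ is unitary by Proposition \ref{eigIdent}(b), and hence so is the composition $\Uthree=\Identif^{-1}\circ\Unit\colon\Lonetwo\too\Lthree$.

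For the compatibility with $N$ I would test on generators. For $\fktG\in\Cc$ we have $N(\fktG)\in\A$, so $\Unit(N(\fktG))=\gelf(N(\fktG))$, and Proposition \ref{eigIdent}(a) identifies this with $\Identif(\Nthree(\fktG))$; applying $\Identif^{-1}$ gives $\Uthree(N(\fktG))=\Nthree(\fktG)$. For the compatibility with the Koopman operators I would chain the two parts of Proposition \ref{commutU}: part (a) reads $\Unit\circ T_{\g}=\Two_{\g}\circ\Unit$, while part (b), viewed as the operator identity $\Identif\circ\Three_{\g}=\Two_{\g}\circ\Identif$, yields $\Identif^{-1}\circ\Two_{\g}=\Three_{\g}\circ\Identif^{-1}$. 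Combining, $\Uthree\circ T_{\g}=\Identif^{-1}\circ\Two_{\g}\circ\Unit=\Three_{\g}\circ\Identif^{-1}\circ\Unit=\Three_{\g}\circ\Uthree$, which is the required intertwining (the $\Unit$ on the right of the displayed condition in the statement being a misprint for $\Uthree$).

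The one substantial point is the multiplicativity of $\Uthree$ and of $\Uthree^{-1}$ on the $L^{\infty}$-spaces, and I would reduce it to the two factors separately. For $\Identif$ there is nothing to do: $\Identif(f)=f\circ\identif$ is composition with the homeomorphism $\identif\colon\twoset\too\threeset$ of Proposition \ref{idenifIso}(b), and since $\mthree$ is by construction the push-forward of $\mtwo$ along $\identif$, the map $\Identif$ (hence $\Identif^{-1}$) is a $*$-isomorphism of $\Lthreeinf$ onto $\Ltwoinf$. The delicate factor is $\Unit$: on the dense subalgebra $\A$ it coincides with the multiplicative Gelfand isomorphism $\gelf$, but multiplication is not continuous for the $\Lonetwo$-norm, so multiplicativity does not propagate to all of $\Loneinf$ by density alone. \emph{This last step is the main obstacle.}

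To clear it I would pass to multiplication operators. For $f\in\A$ one checks, using the multiplicativity of $\gelf$ on $\A$ and the density of $\A$ in $\Lonetwo$ from Lemma \ref{AlgDense}, that $\Unit M_{f}\Unit^{-1}=M_{\gelf(f)}$, where $M_{g}$ denotes multiplication by $g$; thus conjugation by $\Unit$ carries the von Neumann algebra generated by $\{M_{f}:f\in\A\}$ onto the one generated by $\{M_{f'}:f'\in C(\twoset)\}$. Here fullness enters: the $\Lonetwo$-density of $\A$ forces the $\sigma$-algebra generated by $\A$ to be the full one modulo $\mone$-null sets, so the former von Neumann algebra is all of $\Loneinf$, and likewise the latter is all of $\Ltwoinf$. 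Conjugation by $\Unit$ therefore induces a $*$-isomorphism $\Phi\colon\Loneinf\too\Ltwoinf$ with $\Unit M_{f}\Unit^{-1}=M_{\Phi(f)}$; evaluating at $\I_{\oneset}$ and using $\Unit(\I_{\oneset})=\I_{\twoset}$ gives $\Unit(f)=\Phi(f)$ for every $f\in\Loneinf$. Hence $\Unit$ restricts to a multiplicative bijection $\Loneinf\too\Ltwoinf$, and the same holds for $\Unit^{-1}$. Composing the two multiplicative bijections $\Unit$ and $\Identif^{-1}$ then delivers both $\Uthree(fg)=\Uthree(f)\Uthree(g)$ on $\Loneinf$ and $\Uthree^{-1}(f'g')=\Uthree^{-1}(f')\Uthree^{-1}(g')$ on $\Lthreeinf$, completing the verification that $\Uthree$ is a spatial isomorphism and hence that $\N$ and $\N_{\identif}$ are equivalent.
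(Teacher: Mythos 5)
Your proof is correct, and for unitarity and the first two bullet points it coincides with the paper's own argument: both deduce $\Uthree\circ N=\Nthree$ from Proposition \ref{eigIdent} and the Koopman intertwining from Proposition \ref{commutU} (you also correctly read the $\Unit$ on the right of the second bullet as a misprint for $\Uthree$; the paper's own proof ends with the same misprint). Where you genuinely diverge is the third bullet. The paper dismisses the multiplicativity of $\Unit$ on $\Loneinf$ in one sentence --- ``the operator $\Unit$ is multiplicative since it is the extension of the Gelfand representation'' --- which, as you note, is not automatic: $\Unit$ is only the $L^2$-extension of $\gelf$, and multiplication is not continuous for $\abstwo{\cdot}$, so multiplicativity on the dense subalgebra $\A$ does not pass to $\Loneinf$ by density alone. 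Your detour through multiplication operators closes exactly this gap: the operator identity $\Unit M_f\Unit^{-1}=M_{\gelf(f)}$ for $f\in\A$ \emph{does} extend by density, because $M_f$ and $M_{\gelf(f)}$ are bounded operators; the $L^2$-density of $\A$ in $\Lonetwo$ and of $C(\twoset)=\gelf(\A)$ in $\Ltwo$ makes $\I_{\oneset}$ and $\I_{\twoset}$ cyclic vectors, so the generated von Neumann algebras are precisely $\Loneinf$ and $\Ltwoinf$; and evaluating the resulting $*$-isomorphism $\Phi$ at $\I_{\oneset}$, using $\Unit(\I_{\oneset})=\I_{\twoset}$, turns the spatial implementation into the pointwise statement $\Unit(fg)=\Unit(f)\Unit(g)$, showing along the way that $\Unit(\Loneinf)\subset\Ltwoinf$, which the paper also tacitly assumes. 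So your route is not merely different: it supplies the justification the paper's proof omits, at the modest cost of invoking the standard fact that an abelian von Neumann algebra with a cyclic vector is maximal abelian.
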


\begin{proof}
The maps $\Uthree:\Lonetwo\too\Lthree$ and $\Uthree^{-1}:\Lthree\too\Lonetwo$ are obviously well-defined and unitary, since $\Unit$ and $\Identif$ are unitary.

\begin{enumerate}[$\bullet$]
\item For the first point we have $\Identif\circ\Nthree=\Unit\circ N$ by Proposition \ref{eigIdent}. This yields $\Unit^{-1}\circ\Identif\circ\Nthree=N$ and $\Uthree\circ N=\Nthree$.

\item For the second point we use $(\Three_{\g}\fthree)\circ\identif=\Two_{\g}(\fthree\circ\identif)$, shown in Proposition \ref{commutU} and obtain $\Identif\circ\Three_{\g}=\Two_{\g}$, from the definition of $\Identif$. This is equivalent to $\Three_{\g}=\Identif^{-1}\circ\Two_{\g}$ and yields 
$$\Uthree \circ T_{\g}=\Identif^{-1}\circ\Unit\circ T_{\g}=\Identif^{-1} \circ\Two_{\g}\circ\Unit=\Three_{\g}\circ\Unit.$$

\item The last point follows directly from the properties of $\Unit$ and $\Identif$.\\ The operator $\Unit$ is multiplicative since it is the extension of the Gelfnad representation and for the map $\Identif$ we have
$$
\Identif(fg)
=(fg)\circ \identif\\
=(f\circ \identif)(g\circ\identif)\\
=\Identif(f)\Identif(g).
$$
\end{enumerate}
This finishes the proof
\end{proof}

\begin{proof}[Proof of Theorem \ref{ThmEquiTBMDS}]
This is a direct consequence of Lemma \ref{SpatialIso}.
\end{proof}



\section{The equivalence between spatial processes and PMDS}
Analogously to the second section this section deals with the equivalence of spatial processes to  PMDS. For PMDS we obtain $N(\fktG)\geq 0$ for $\fktG\in\CC$ with $\fktG\geq 0$ from the definition. In the following we will show the converse as stated in Theorem \ref{equiPMDS}.
Since we are focused on positive functions we use
$$\Cc^+ := \{ \fktG \in\Cc \text{ with }\fktG\geq 0\}$$
throughout this section. 

\begin{theorem}\label{equiPMDS}
Let $\N=(N,\oneset,\mone,T)$ be a full spatial process.
Then $\N$ is equivalent to a positive measure dynamical system (PMDS ) if 
$$N(\fktG)\geq0 ~\mbox{ holds for all }~ \fktG\in\Cc^+.$$
\end{theorem}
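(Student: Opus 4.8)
The plan is to mirror the strategy of Section 2, replacing the rôle of the bounded generators $N(\fktG)$ by the bounded functions $\fktC(N(\fktG))$, $\fktC\in\CC$. Since $N(\fktG)$ need no longer lie in $\Loneinf$, I would define the commutative $C^*$-algebra $\A$ as the smallest closed subalgebra of $\Loneinf$ containing $\charfkt{\A}$ and all $\fktC(N(\fktG))$ with $\fktC\in\CC$, $\fktG\in\Cc$. These generators are bounded because $\fktC$ has compact support, so $\A\sub\Loneinf$ is a genuine commutative $C^*$-algebra. Its density in $\Lonetwo$ is now immediate from fullness: the generating family $\B$ of Lemma \ref{AlgDense} is exactly the span of products of such generators, whence $\B\sub\A$ directly, without the Stone--Weierstrass approximation needed in the bounded case. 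Gelfand theory then yields $\A\iso\C{\twoset}$ with $\twoset\defas\hat{\A}$ compact, a Gelfand representation $\gelf$, an induced probability measure $\mtwo$ on $\twoset$ via $\int_{\twoset}f\,d\mtwo=\int_{\oneset}\gelf^{-1}(f)\,d\mone$, and a unitary extension $\Unit:\Lonetwo\too\Ltwo$, exactly as before.

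The new difficulty is to attach a \emph{positive} (and generally unbounded) measure $\threelem$ to each $\twoelem\in\twoset$, since $\twoelem(N(\fktG))$ is no longer available. I would fix real cutoffs $\vartheta_k\in\CC$ with $0\leq\vartheta_k(s)\leq s$ and $\vartheta_k(s)\uparrow s$ for $s\geq0$, and for $\fktG\in\Cc^+$ set
$$\threelem(\fktG)\defas\lim_{k\to\infty}\twoelem\big(\vartheta_k(N(\fktG))\big).$$
Since each character is positive and $\vartheta_{k+1}(N(\fktG))-\vartheta_k(N(\fktG))\geq0$ in $\A$, the sequence is nondecreasing, so the limit exists in $[0,\infty]$ and $\fktG\mapsto\threelem(\fktG)$ extends to a positive linear functional on $\Cc$. \emph{The main obstacle is finiteness}, and here the defining property $N(\fktG)\in\Lonetwo$ of a spatial process (which is precisely the square-integrability condition of a PMDS) enters. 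Writing $g_k\defas\gelf(\vartheta_k(N(\fktG)))$ one has $\int_{\twoset}g_k\,d\mtwo=\int_{\oneset}\vartheta_k(N(\fktG))\,d\mone$; as $0\leq\vartheta_k(N(\fktG))\uparrow N(\fktG)$ with $N(\fktG)\in\Lonetwo\sub L^1(\oneset,\mone)$, monotone convergence gives $\int_{\twoset}\sup_k g_k\,d\mtwo=\int_{\oneset}N(\fktG)\,d\mone<\infty$, so $\threelem(\fktG)<\infty$ for $\mtwo$-almost every $\twoelem$. Using the countable basis of $\G$ assumed for PMDS, $\Cc$ is separable, so I intersect these full-measure sets over a countable dense family and pass to the limit to obtain a single Borel set $\twoset_0\sub\twoset$ of full measure on which $\threelem$ is finite for \emph{all} $\fktG$. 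For $\twoelem\in\twoset_0$ the Riesz--Markov theorem then produces a positive measure $\threelem\in\MGp$. Moreover $\vartheta_k(N(\fktG))\to N(\fktG)$ in $\Lonetwo$ by dominated convergence, so continuity of $\Unit$ gives $\Unit(N(\fktG))(\twoelem)=\threelem(\fktG)$ for $\mtwo$-a.e.\ $\twoelem$, i.e.\ $\Nthree(\fktG)\circ\identif=\Unit(N(\fktG))$.

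With the measures in hand I would set $\identif:\twoset_0\too\MGp$, $\twoelem\mapsto\threelem$, and $\threeset\defas\identif(\twoset_0)$. Injectivity follows as in Proposition \ref{idenifIso}: for fixed $\fktG\in\Cc^+$ the map $\fktC\mapsto\twoelem(\fktC(N(\fktG)))$ is multiplicative, hence evaluation at a single point, which the cutoffs identify as $\threelem(\fktG)$; thus $\twoelem(\fktC(N(\fktG)))=\fktC(\threelem(\fktG))$, so two characters separated on some generator yield distinct measures. Since each coordinate $\twoelem\mapsto\threelem(\fktG)$ is a pointwise limit of continuous functions, $\identif$ is Borel, and pushing $\mtwo$ forward along $\identif$ defines a probability measure $\mthree$ on $\threeset$. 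Its square-integrability is then exactly $\int_{\threeset}\abs{\Nthree(\fktG)}^2\,d\mthree=\int_{\twoset}\abs{\Unit(N(\fktG))}^2\,d\mtwo=\abstwo{N(\fktG)}^2<\infty$.

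It remains to install the dynamics and the isomorphism, where the arguments of Section 2 transfer essentially verbatim because the Koopman operator is an algebra automorphism commuting with the functional calculus: from $T_{\g}(\fktC(N(\fktG)))=\fktC(N(\trans_{\g}\fktG))$ one sees $T_{\g}$ preserves $\A$, hence $\twoelem\circ T_{\g}\in\twoset$, and taking limits over the cutoffs yields $\threelem\circ\trans_{\g}=\threelemvari{\twoelem_t}$, so $\threeset$ is translation invariant. Invariance of $\mthree$ follows from invariance of $\mone$ exactly as in Proposition \ref{invMeas}, giving a PMDS $\N_{\identif}=(\Nthree,\threeset,\mthree,\Three)$. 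Finally, with $\Identif:f\mapsto f\circ\identif$ and $\Uthree\defas\Identif^{-1}\circ\Unit$ as in Section 2, the identities $\Uthree\circ N=\Nthree$, the intertwining of the Koopman operators, and multiplicativity on $L^{\infty}$ are checked as in Proposition \ref{eigIdent}, Proposition \ref{commutU} and Lemma \ref{SpatialIso}, using that $\identif$ is a measurable bijection onto $\threeset$ carrying $\mtwo$ to $\mthree$. Thus $\Uthree$ is a spatial isomorphism and $\N$ is equivalent to the PMDS $\N_{\identif}$.
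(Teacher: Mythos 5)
Your outline follows the paper's strategy (Gelfand theory, monotone cutoff limits, a null set off which Riesz--Markov produces the measures $\threelem$, pushforward of $\mtwo$, then the Section~2 endgame), but there is a genuine gap at the central step, and it is exactly the point the paper's extra machinery exists to handle. You keep the algebra generated by \emph{compactly supported} cutoffs $\fktC\in\CC$ and approximate the identity by compactly supported $\vartheta_k$. Such $\vartheta_k$ cannot be nondecreasing functions on $[0,\infty)$ (they must return to $0$), and this poisons the limit at precisely the characters that matter. Indeed, when some $N(\fktG)$ is essentially unbounded (the case the theorem is really about), the unital subalgebra $\IC\mathds{1}+\{\fktC(N(\fktG)):\fktC\in C_0(\IC)\}$ is isomorphic to $C(E\cup\{\infty\})$ with $E\subset[0,\infty)$ unbounded, and since restriction of characters is onto for unital inclusions of commutative $C^*$-algebras, $\twoset=\Ahat$ contains characters $\twoelem$ that vanish on every $\fktC(N(\fktG))$, $\fktC\in\CC$ (``points at infinity''). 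At such $\twoelem$ your limit is $\lim_k 0=0$, not $+\infty$. Two consequences. First, your finiteness criterion is vacuous: your limit is finite at \emph{every} character (it is either an evaluation value $p\in E$ or $0$), so your $\twoset_0$ is all of $\twoset$ and fails to excise the bad set; the monotone-convergence computation is correct but certifies nothing pointwise. Second, and fatally, additivity fails on the bad set: if $\twoelem$ restricts to evaluation at $p>0$ on the $\fktG$-subalgebra and to $0$ on the $\psi$-subalgebra, then the a.e.\ identity $\fktC(N(\fktG)+N(\psi))=\fktC(N(\fktG)+N(\psi))\cdot\fktC_R(N(\psi))$ (valid for $\fktC$ supported in $B_R(0)$, $\fktC_R\in\CC$ equal to $1$ on $\close{B_R(0)}$, using $N(\fktG)\geq0$) forces $\threelem(\fktG+\psi)=0\neq p=\threelem(\fktG)+\threelem(\psi)$. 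So $\fktG\mapsto\threelem(\fktG)$ is not linear on your $\twoset_0$, Riesz--Markov does not apply there, and $\identif$ is undefined on part of the set you work with; your injectivity argument (``multiplicative, hence evaluation at a point'') overlooks exactly this zero case.

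This is precisely why the paper enlarges the algebra to cutoffs in $\Cu$: the \emph{monotone} truncations $\fktCSeq{n}$ ($0$ for $z<0$, $z$ on $[0,n]$, constant $n$ beyond) applied to $N(\fktG)$ then lie in $\A$, their character limits equal $+\infty$ exactly at the bad characters, so finiteness carves out a genuine exceptional set $\nset$; that set is shown to be null via the countable dominating family $\fktG\leq k\fktGseq{\fix}$ (this uses $\sigma$-compactness of $\G$ and the monotonicity of $\fktCSeq{n}$, which your $\vartheta_k$ lack --- note that your alternative ``intersect over a countable dense family and pass to the limit'' secretly needs such a domination argument too, since sup-norm density alone does not transfer finiteness); and additivity off $\nset$ is proved from $\fktCSeq{n}(s+t)\leq\fktCSeq{n}(s)+\fktCSeq{n}(t)\leq\fktCSeq{2n}(s+t)$. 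Your setup admits a clean repair without changing the algebra: since $N(\fktG)\geq0$ a.e., one has $\min(N(\fktG),n)=n\mathds{1}-\fktC_n(N(\fktG))$ with $\fktC_n(z)=\max(0,n-\abs{z})\in\CC$, so the monotone truncations already lie in your unital $\CC$-generated algebra; rerunning your argument with these in place of $\vartheta_k$, and with the dominating family $k\fktGseq{\fix}$ in place of the dense family, essentially reproduces the paper's proof.
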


To prove this theorem, we take a similar approach as above, again using the theory of C*-algebras to identify $\N$ with a process over positive measures.
Since the end of the proof is analogical to the proof of Theorem \ref{ThmEquiTBMDS}
we will only show the beginning here and leave the rest to the reader.

As already discussed in \cite{LeMo}  the fullness condition is in some sense a convention. However, for our purpose we need to extend the definition involving continuous functions with compact support to a definition involving uniformly continuous bounded functions but the rest of the theory remains valid. We denote the space of uniformly continuous bounded functions from $\IC$ to $\IC$ by $\Cu$ and set for this section 
$${\A\defas\lin\{\fktC_1(N(\fktG_1))\cdots\fktC_n(N(\fktG_n)):\,n\in\IN,\, \fktC_j\in \Cu,\,\fktG_j\in\Cc\mbox{ for }j=1,\cdots,\, n\}}\cup\{\mathds{1}_{\A}\}.$$
The process $\N$ is then a full spatial process with respect to $\A$, as $\A$ is a dense subset of $\Lonetwo$. This can be deduced from the proof of Proposition 5.2 in \cite{LeMo} and the fact that the composition $h_n\circ\fktC$ converges to $h\circ\fktC$ for every sequence of functions $h_n\in\Lonetwo$ with $h_n\too h,~n\to\infty$ and $\fktC\in\Cu$. The latter will be shown in the appendix.

Note that the statement of Proposition 5.2 as well as Theorem 5.4. in \cite{LeMo}, refer to bounded continuous functions. However, the proof of Proposition 5.2, referring to Proposition 4.10, only works for continuous functions with compact support. In this sense the statements of Proposition 5.2 and Theorem 5.4. are not proven as stated.

The Gelfand representation $\gelf$ maps $\A$ to $C(\twoset)$ and $C(\twoset)$ is a dense subset of $\Ltwo$, as $X=\Ahat$ is compact. This property allows us to extend the map $\gelf$ to a unitary operator 
$$U:\Lonetwo\too\Ltwo ~\text{ and we set }~ N':= U\circ N.$$ 
Note that $U$ coincide with the Gelfand representation $\gelf$ on $\A$ and inherits the properties of $\gelf$, as its extension, but in contrast to $\gelf$, the operator $U$ is defined on the whole space $\Lonetwo$.

As we did it for  TBMDS, the purpose is to identify the elements of $\twoset$ with positive measures but there is a problem:
Now $N(\fktG)$ only belongs to $\Lonetwo$ and not necessarily to $\A$. In particular $N(\fktG)$ does not need to be a bounded function and it is only defined almost everywhere. Hence the evaluation at $\twoelem \in\Ahat$ is not well defined.

To deal with that problem, we consider a sequence $\fktCseq{n}\in\Cu$ with 
 $$\fktCseq{n}(z):=\begin{cases}
  0:  & \text{for }z<0\\
  z: & \text{for }0\leq z\leq n,
\end{cases} ~\mbox{ and }~\fktCseq{n}\leq\fktCseq{n+1} ~\mbox{ for all }~n\in\IN.$$
Then $\fktCseq{n}$ converges pointwise monotonously increasing to the identity on $\IR^+$. (Note that we consider only $\fktG\in\CC^+$, so that the convergence on $\IR^+$ is sufficient for our purpose.) Furthermore, we choose a sequence $(\fktGseq{n})_{n\in\IN}\in\Cc$ such that
$$0\leq\fktGseq{n}\leq\fktGseq{n+1}\leq 1 ~\mbox{ and }~\fktGseq{n}=1 \mbox{ on a compact }\subG_n\subset\G$$
with $\subG_n^{\circ}\subset\subG_{n+1}$ for all $n\in\IN$ and $\bigcup_{n\in\IN}\subG_n=\G$. With these two sequences we obtain the following result.

\begin{lemma}\label{exGW}
Let $(\fktCseq{n})_{n\in\IN}$ and $(\fktGseq{n})_{n\in\IN}$ be defined as above and let $\fix\in\IN$ be fixed. Then the sequence $\gelf(\fktCseq{n}(N(\fktGseq{\fix})))(\twoelem)$ is pointwise increasing for all $\twoelem \in \twoset$. The limit
$$\Nrx(\fktGseq{\fix})(\twoelem)\defas\lim_{n\in\IN}\gelf(\fktCseq{n}(N(\fktGseq{\fix})))(\twoelem)$$ is almost everywhere finite and agrees almost everywhere with $N'(\fktGseq{\fix})$. 
\end{lemma}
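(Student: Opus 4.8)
The plan is to argue in two stages: first establish the pointwise monotonicity via the positivity of $N$ and the order-preservation of the Gelfand isomorphism, and then identify the pointwise limit with $N'(\fktGseq{\fix})$ by routing through an auxiliary $L^2$-convergence on the source side. To begin, since $\fktGseq{\fix}\in\Cc^+$, the positivity hypothesis of Theorem~\ref{equiPMDS} gives $N(\fktGseq{\fix})\geq 0$ $\mone$-a.e.\ on $\oneset$, and $N(\fktGseq{\fix})\in\Lonetwo$ by assumption. Because $\fktCseq{n}\in\Cu$, the element $\fktCseq{n}(N(\fktGseq{\fix}))$ is a single-factor generator of $\A$ and hence lies in $\A$; in particular $\gelf$ is defined on it and agrees there with the unitary extension $U$, so that $\gelf(\fktCseq{n}(N(\fktGseq{\fix})))$ is a well-defined element of $\C{\twoset}$.

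For the monotonicity I would use that $\fktCseq{n}\leq\fktCseq{n+1}$ together with $N(\fktGseq{\fix})\geq 0$ a.e., which yields $0\leq\fktCseq{n}(N(\fktGseq{\fix}))\leq\fktCseq{n+1}(N(\fktGseq{\fix}))$ as self-adjoint elements of $\A$, i.e.\ the difference is a positive element. Since $\gelf:\A\too\C{\twoset}$ is an isometric $*$-isomorphism, it maps positive elements to positive elements and thus preserves the order of self-adjoint elements; here positivity in $\A\subset\Loneinf$ (``$\geq 0$ a.e.'') corresponds to positivity in $\C{\twoset}$ (``$\geq 0$ everywhere''). Consequently $\gelf(\fktCseq{n}(N(\fktGseq{\fix})))(\twoelem)\leq\gelf(\fktCseq{n+1}(N(\fktGseq{\fix})))(\twoelem)$ for every $\twoelem\in\twoset$, which is the asserted monotonicity and guarantees that $\Nrx(\fktGseq{\fix})(\twoelem)$ exists in $[0,\infty]$ for every $\twoelem$.

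It remains to show that this limit is a.e.\ finite and equals $N'(\fktGseq{\fix})$. Since $0\leq\fktCseq{n}(N(\fktGseq{\fix}))\uparrow N(\fktGseq{\fix})$ a.e.\ with dominating function $N(\fktGseq{\fix})\in\Lonetwo$, dominated convergence gives $\fktCseq{n}(N(\fktGseq{\fix}))\too N(\fktGseq{\fix})$ in $\Lonetwo$. Applying the continuous map $U$ and using $U=\gelf$ on $\A$ and $N'=U\circ N$, I obtain
$$\gelf(\fktCseq{n}(N(\fktGseq{\fix})))=U(\fktCseq{n}(N(\fktGseq{\fix})))\too U(N(\fktGseq{\fix}))=N'(\fktGseq{\fix})\quad\text{in }\Ltwo.$$
Thus the sequence $\gelf(\fktCseq{n}(N(\fktGseq{\fix})))$ converges pointwise everywhere (monotonically, to $\Nrx(\fktGseq{\fix})$) and simultaneously in $\Ltwo$ (to $N'(\fktGseq{\fix})$). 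To reconcile these I would pass to a subsequence converging $\mtwo$-a.e.\ to $N'(\fktGseq{\fix})$; as the full sequence converges pointwise to $\Nrx(\fktGseq{\fix})$, this subsequence does as well, and comparing limits on the a.e.-set gives $\Nrx(\fktGseq{\fix})=N'(\fktGseq{\fix})$ $\mtwo$-a.e. Finiteness of $N'(\fktGseq{\fix})\in\Ltwo$ then yields the a.e.-finiteness of $\Nrx(\fktGseq{\fix})$.

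I expect the main obstacle to be precisely this final reconciliation: the monotone convergence is genuinely pointwise on all of $\twoset$ and may a priori reach $+\infty$ on a null set, whereas the only control coming from $N'$ is in the $\Ltwo$-sense. The two limits therefore cannot be identified directly and the argument must be channelled through an a.e.-convergent subsequence. A secondary point needing care is the passage from the a.e.\ order on $\Loneinf$ to the everywhere order on $\C{\twoset}$, which is where the $C^*$-structure of $\A$ and the isometric $*$-isomorphism property of $\gelf$ are essential.
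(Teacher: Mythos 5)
Your proof is correct and takes essentially the same route as the paper's: positivity of $N$ together with order-preservation of the Gelfand isomorphism gives the everywhere-pointwise monotonicity, the $L^2$-convergence of $\fktCseq{n}(N(\fktGseq{\fix}))$ is pushed through $U=\gelf$ on $\A$ to get convergence to $N'(\fktGseq{\fix})$ in $\Ltwo$, and the two limits are then identified almost everywhere. Your explicit subsequence argument reconciling the pointwise limit with the $\Ltwo$ limit is precisely the step the paper passes over by simply asserting that $\Nrx(\fktGseq{\fix})$ ``is a representative of'' $N'(\fktGseq{\fix})$, so it is a clarification of the same argument rather than a different approach.
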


\begin{proof}
The sequence $(\fktGseq{n})_{n\in\IN}$ is a subset of $\Cc^+$  by definition. Thus, $N(\fktGseq{n})(\onelem)\geq 0$ holds for all $n\in\IN$, as $N$ is positive. 
For this reason, we can apply $\fktCseq{n}$ to $N(\fktGseq{\fix})$. By $\fktCseq{n}\leq \fktCseq{n+1}$, we receive the monotonously increasing sequence $\big(\fktCseq{n}(N(\fktGseq{\fix}))\big)_{n\in\IN}\subset\A$, which converges to $N(\fktGseq{\fix})$ in $\Lonetwo$. Hence the sequence $U(\fktCseq{n}(N(\fktGseq{\fix})))$ converges to $N'(\fktGseq{\fix})$ in $\Ltwo$.
Since the Gelfand representation $\gelf$ is positive as well, the sequence
$$
\gelf(\fktCseq{n}(N(\fktGseq{\fix})))\in\C{\twoset}\subset\Ltwo,~n\in\IN
$$
is continuous, monotonously increasing and well defined for every $\twoelem\in\twoset$. As $U$ coincides with $\gelf$ on $\A$ the sequence $\Unit(\fktCseq{n}(N(\fktGseq{\fix})))$ is also well defined for every $\twoelem\in\twoset$ and converges pointwise in $\Ltwo$ to an element $\Nrx(\fktGseq{\fix})$. 
The map $\Nrx(\fktGseq{\fix})$ is a representative of $\Nx(\fktGseq{\fix})\in\Ltwo$. Hence the limit defined above is finite almost everywhere.
\end{proof}

\begin{remark}
Note that the pointwise limit in Lemma \ref{exGW} is independent of the choice of the sequence $\fktCseq{n}$. This can easily be verified by considering another sequence $\fktCSeq{n}\in\cc{\IR}$ with the same properties. Then there exists for every $n\in\IN$ an element $m\in\IN$ and for every $m\in\IN$ an element $k\in\IN$ such that $\fktCseq{n}\leq\fktCSeq{m}\leq\fktCseq{k}$ holds. Consequently the limits are equal.
\end{remark}

Next we will show that this limit is finite for all $\fktG\in\Cc^+$ and all $\twoelem\in\twoset$ up to a set of measure zero, which does not depend on $\fktG$. 
\begin{prop}
There exists a null-set $\nset\subset \twoset$, with respect to the measure $\mtwo$ such that the limit 
$$\Nrx(\fktG)(\twoelem)\defas\lim_{n\in\IN}\gelf(\fktCseq{n}(N(\fktG)))(\twoelem)$$
is finite for all $\fktG\in\Cc^+$ and all $\twoelem\in\twoset\setminus\nset$.  
\end{prop}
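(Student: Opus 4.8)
The plan is to reduce the uncountable family $\Cc^+$ to the countable exhausting family $(\fktGseq{\fix})_{\fix\in\IN}$ from Lemma \ref{exGW}. For each $\fix\in\IN$ that lemma provides a $\mtwo$-null set $\nset_{\fix}\subset\twoset$ off which $\Nrx(\fktGseq{\fix})(\twoelem)$ is finite. I would then set
$$\nset\defas\bigcup_{\fix\in\IN}\nset_{\fix},$$
which is again a $\mtwo$-null set, being a countable union of null sets. The whole point is that for an arbitrary $\fktG\in\Cc^+$ the value $\Nrx(\fktG)(\twoelem)$ can be dominated by a multiple of $\Nrx(\fktGseq{\fix})(\twoelem)$ for a suitable $\fix$, so that finiteness off $\nset$ propagates from the countable family to all of $\Cc^+$.

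To make this precise I would first record two order properties of the construction $\fktG\mapsto\Nrx(\fktG)=\lim_n\gelf(\fktCseq{n}(N(\fktG)))$, viewed as an increasing limit with values in $[0,\infty]$. If $\fktG_1\leq\fktG_2$ in $\Cc^+$, then $N(\fktG_2-\fktG_1)\geq 0$ by positivity and linearity of $N$, so $N(\fktG_1)\leq N(\fktG_2)$ as real $L^2$-functions; since each $\fktCseq{n}$ is non-decreasing on $\IR^+$, the truncations $\fktCseq{n}(N(\fktG_1))\leq\fktCseq{n}(N(\fktG_2))$ are elements of $\A$ whose difference is a positive element, and as $\gelf$ is a positive map this gives $\gelf(\fktCseq{n}(N(\fktG_1)))(\twoelem)\leq\gelf(\fktCseq{n}(N(\fktG_2)))(\twoelem)$ for every $\twoelem\in\twoset$, whence $\Nrx(\fktG_1)(\twoelem)\leq\Nrx(\fktG_2)(\twoelem)$. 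Together with homogeneity in the limit, $\Nrx(C\fktG)(\twoelem)=C\,\Nrx(\fktG)(\twoelem)$ for $C>0$, this is all the information I need. The homogeneity holds because the two monotone families $\fktCseq{n}(C\,\cdot\,)$ and $C\fktCseq{n}(\cdot)$ interleave and increase to the same function on $\IR^+$, so by exactly the interleaving argument of the Remark following Lemma \ref{exGW} the two limits coincide.

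Next I would exploit the compact support. Given $\fktG\in\Cc^+$, the functions $\fktGseq{n}$ increase pointwise to $1$ on $\G$ (each point lies in some $\subG_m$, where $\fktGseq{m}=1$, and the sequence is monotone), so by Dini's theorem they converge to $1$ uniformly on the compact set $\supp\fktG$. Choosing $\fix$ so large that $\fktGseq{\fix}\geq\tfrac{1}{2}$ on $\supp\fktG$ and putting $C\defas 2\absinf{\fktG}$, the pointwise estimate $\fktG\leq C\,\fktGseq{\fix}$ holds on all of $\G$ (trivially off $\supp\fktG$, where $\fktG$ vanishes). By the two order properties this yields
$$\Nrx(\fktG)(\twoelem)\leq\Nrx(C\fktGseq{\fix})(\twoelem)=C\,\Nrx(\fktGseq{\fix})(\twoelem),$$
which is finite for every $\twoelem\in\twoset\setminus\nset$ by the choice of $\nset$. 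As $\fktG\in\Cc^+$ was arbitrary, this proves the proposition.

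The step I expect to be the main obstacle is the very first reduction: since $\Cc^+$ is uncountable, one cannot simply collect the null set attached to each individual $\fktG$ and hope the union stays null. The device that resolves this is the domination $\fktG\leq C\fktGseq{\fix}$ by a member of the fixed countable exhausting family, which is available precisely because every $\fktG$ has compact support and the $\fktGseq{\fix}$ are eventually $\geq\tfrac{1}{2}$ on any prescribed compact set. Checking that $\Nrx$ respects the order and scaling used in this domination is routine but has to be done with some care, because $\Nrx(\fktG)$ is defined only as a monotone limit in $[0,\infty]$ and the truncations $\fktCseq{n}$ are themselves neither linear nor homogeneous.
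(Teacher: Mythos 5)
Your proof is correct and follows the same skeleton as the paper's: dominate an arbitrary $\fktG\in\Cc^+$ by a scaled member of the exhausting sequence $(\fktGseq{\fix})_{\fix\in\IN}$, collect countably many null sets into $\nset$, and transfer finiteness through monotonicity of $\fktG\mapsto\Nrx(\fktG)$. The one genuine difference is how the scaling constant is handled. The paper absorbs it into the countable family: using $\supp\fktG\subset\subG_{\fix}$ and an integer bound $\absinf{\fktG}\leq k$ it writes $\fktG\leq k\fktGseq{\fix}$, reruns the argument of Lemma \ref{exGW} for each function $k\fktGseq{\fix}$, and sets $\nset\defas\bigcup_{\fix}\bigcup_{k}\nset_{k,\fix}$; after that, only monotonicity of $\Nrx$ (positivity of $N$, monotonicity of the truncations $\fktCseq{n}$, positivity of $\gelf$) is needed. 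You instead attach null sets only to the $\fktGseq{\fix}$ themselves and pay for the smaller index set with the extra identity $\Nrx(C\fktGseq{\fix})(\twoelem)=C\,\Nrx(\fktGseq{\fix})(\twoelem)$ in $[0,\infty]$ for real $C>0$, which you justify by the same interleaving device as the paper's Remark following Lemma \ref{exGW}; that step is sound, and it rests on the same implicit hypothesis (the $\fktCseq{n}$ are non-decreasing on $\IR^+$) that the paper's own monotonicity step uses. Your appeal to Dini's theorem is a mild detour: since $\fktGseq{\fix}=1$ on $\subG_{\fix}$ and the $\subG_{\fix}$ exhaust $\G$, one can choose $\fix$ with $\supp\fktG\subset\subG_{\fix}$ and take $C=\absinf{\fktG}$ directly, avoiding the factor $2$. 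In short: same approach, but the paper's indexing by pairs $(k,\fix)$ buys freedom from any homogeneity lemma, whereas your version keeps the dominating family minimal at the cost of that additional (correctly executed) scaling argument.
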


\begin{proof}
For all $\fktG\in\Cc^+$ there  exist $k,\fix\in\IN$ satisfying
$\fktG\leq k\fktGseq{\fix}$ with ${\supp(\fktG)\subset\subG_L}$ and $\absinf{\fktG}\leq k$.
By the same arguments that provided the existence of the limit in Lemma \ref{exGW}, we obtain that
$$\Nrx (k\fktGseq{\fix})(\twoelem)\defas\lim_{n\in\IN}\gelf(\fktCseq{n}(N (k\fktGseq{\fix})))(\twoelem)$$
exists pointwise for all $\twoelem\in\twoset$ and is finite almost everywhere.
We set 
$$\nset_{k,\fix}\defas\{\twoelem \in \twoset:~\Nrx (k\fktGseq{\fix})(\twoelem)=\infty\},$$
then $\nset_{k,\fix}$ is a null-set for every $k\in\IN$ and every $\fix\in\IN$.
Thus 
$$\nset\defas\bigcup_{\fix\in\IN}\bigcup_{k\in\IN}\nset_{k,\fix}$$ is a null-set as well (as a countable union of null-sets).
By the assumption that $N$ is positive we have $N(\fktG)\leq N (k\fktGseq{\fix})$, since $N$ is linear, and $\Nrx(\fktG)\leq\Nrx (k\fktGseq{\fix})$ follows from the definition of $\fktCseq{n}$. This finishes the proof.  
\end{proof}

Now we can apply the result to an arbitrary $\fktG\in\Cc$, by decomposing $\fktG\in\Cc$ in $\fktG=\fktG_{r_+}-\fktG_{r_-}+i\fktG_{c_+}-i\fktG_{c_-}$ with $\fktG_{r_+},\fktG_{r_-},\fktG_{c_+},\fktG_{c_-}\in\Cc^+$.

We set $\twoset\setminus\nset\asdef\twoset'$ and obtain that 
\begin{align}\label{N^'}
\Nrx(\fktG)\defas\Nrx(\fktG_{r_+})-\Nrx(\fktG_{r_-})+i\Nrx(\fktG_{c_+})-i\Nrx(\fktG_{c_-})
\tag{$\star$}
\end{align}
is finite for all $\fktG\in\Cc$ and all $\twoelem\in\twoset'$. It remains to show that $\Nrx$ is a linear map. 
For this purpose we consider a certain sequence of functions $\fktCSeq{n}\in\Cu$, defined as follows
$$\fktCSeq{n}(z)=:\begin{cases}
  0:  & \text{for }z<0\\
  z: & \text{for }0\leq z\leq n\\
  n: & \text{for }n\leq z,
\end{cases}  ~\mbox{ for all }~n\in\IN.$$

\begin{prop}
For each $\twoelem\in\twoset'$ the map
$\Cc\too\IC,\, \fktG\mapsto \Nrx(\fktG)(\twoelem)$ is linear, positive and continuous.
\end{prop}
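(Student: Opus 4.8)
The plan is to verify the three properties in turn, treating positivity and continuity as comparatively soft and concentrating the work on linearity. Throughout I would evaluate the limit defining $\Nrx$ using the capped truncations $\fktCSeq{n}$ introduced just above rather than the $\fktCseq{n}$; by the remark following Lemma~\ref{exGW} the pointwise limit is independent of the truncating sequence, so for $\fktG\in\Cc^+$ and $\twoelem\in\twoset'$ one has $\Nrx(\fktG)(\twoelem)=\lim_{n}\gelf(\fktCSeq{n}(N(\fktG)))(\twoelem)$, where $\gelf(\fktCSeq{n}(N(\fktG)))(\twoelem)=\twoelem(\fktCSeq{n}(N(\fktG)))$. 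Since $\fktCSeq{n}\leq\fktCSeq{n+1}$ this limit is an increasing supremum of nonnegative numbers, so positivity is immediate: for $\fktG\in\Cc^+$ we have $N(\fktG)\geq0$, hence $\fktCSeq{n}(N(\fktG))\geq0$ in $\A$, and as the character $\twoelem$ is a positive functional each term is nonnegative and so is the supremum.

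For linearity I would first establish additivity and positive homogeneity on the cone $\Cc^+$ and then extend via the decomposition~\eqref{N^'}. The main obstacle is additivity, precisely because the truncations $\fktCSeq{n}$ are nonlinear and $N(\fktG)$ need not lie in $\A$, so $\twoelem$ cannot be evaluated on $N(\fktG)$ directly. The key point is a sandwich inequality: writing $a=N(\fktG)$ and $b=N(\psi)$ for $\fktG,\psi\in\Cc^+$ (so $a,b\geq0$ and $a+b=N(\fktG+\psi)$), a pointwise computation on $\oneset$ using $\fktCSeq{n}(s)=\min(s,n)$ for $s\geq0$ yields
\begin{equation*}
\fktCSeq{n}(a+b)\ \leq\ \fktCSeq{n}(a)+\fktCSeq{n}(b)\ \leq\ \fktCSeq{2n}(a+b)
\end{equation*}
as an order relation between positive elements of $\A$. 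Applying the positive, hence order-preserving, character $\twoelem$ and letting $n\to\infty$, both outer terms converge to $\Nrx(\fktG+\psi)(\twoelem)$ while the middle converges to $\Nrx(\fktG)(\twoelem)+\Nrx(\psi)(\twoelem)$; all limits are finite since $\twoelem\in\twoset'$ and $\fktG,\psi,\fktG+\psi\in\Cc^+$, which gives additivity.

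For positive homogeneity I would use $\lambda\,\fktCSeq{n}(s)=\fktCSeq{\lambda n}(\lambda s)$ for $\lambda\geq0$, $s\geq0$, so that $\lambda\,\twoelem(\fktCSeq{n}(a))=\twoelem(\fktCSeq{\lambda n}(\lambda a))$; letting $n\to\infty$ the left side tends to $\lambda\,\Nrx(\fktG)(\twoelem)$ and the right side to $\Nrx(\lambda\fktG)(\twoelem)$, the supremum being unchanged as the caps $\lambda n$ run to infinity. Alternatively, additivity gives $\IQ_{\geq0}$-homogeneity, and monotonicity (which follows from positivity and additivity, since $\fktG\leq\psi$ forces $\psi-\fktG\in\Cc^+$) lets one squeeze to arbitrary real $\lambda$. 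Once $\Nrx(\cdot)(\twoelem)$ is additive and positively homogeneous on $\Cc^+$, the standard argument shows the definition~\eqref{N^'} is independent of the chosen decomposition $\fktG=\fktG_{r_+}-\fktG_{r_-}+i\fktG_{c_+}-i\fktG_{c_-}$ and defines a $\IC$-linear functional on $\Cc$.

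Finally, continuity follows automatically from linearity and positivity. Fixing a compact $\subG\subset\G$, I would use Urysohn's lemma to pick $g\in\Cc^+$ with $g\equiv1$ on $\subG$; then for real $\fktG\in\Cck$ one has $\absinf{\fktG}\,g\pm\fktG\in\Cc^+$, so positivity gives $\abs{\Nrx(\fktG)(\twoelem)}\leq\absinf{\fktG}\,\Nrx(g)(\twoelem)$, and splitting a complex $\fktG$ into real and imaginary parts produces a bound $\abs{\Nrx(\fktG)(\twoelem)}\leq2\absinf{\fktG}\,\Nrx(g)(\twoelem)$. Hence the restriction of $\fktG\mapsto\Nrx(\fktG)(\twoelem)$ to each $(\Cck,\absinf{\cdot})$ is bounded, and by the definition of the inductive limit topology on $\Cc$ the functional is continuous. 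The only genuinely delicate step is additivity; everything else is bookkeeping.
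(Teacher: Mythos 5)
Your proof is correct, and its core coincides with the paper's: both arguments hinge on exactly the same sandwich inequality $\fktCSeq{n}(s+t)\leq\fktCSeq{n}(s)+\fktCSeq{n}(t)\leq\fktCSeq{2n}(s+t)$ for $s,t\geq0$, applied to $N(\fktG)$, $N(\psi)$ and pushed through the (positive) Gelfand representation to get additivity on $\Cc^+$, followed by the extension to all of $\Cc$ via the decomposition $(\star)$. You differ in two secondary steps, both to your advantage. For homogeneity, the paper goes $\IN\to\IQ\to\IR$, deducing $\Nrx(\lambda\fktG)=\lambda\Nrx(\fktG)$ for real $\lambda$ "by approximation and the continuity of $\gelf$" --- a step that is not really about continuity of $\gelf$ at all, but needs a monotonicity/squeeze argument in the argument $\fktG$; your primary route via the scaling identity $\lambda\,\fktCSeq{n}(s)=\fktCSeq{\lambda n}(\lambda s)$ (with the cap running through $\lambda n\to\infty$ and monotonicity in the cap) handles all $\lambda\geq0$ in one stroke and avoids that imprecision, while your fallback squeeze via $\IQ_{\geq0}$-homogeneity and monotonicity is the correct repair of the paper's argument. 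For continuity, the paper simply invokes the fact that positive linear functionals on $\Cc$ are continuous, whereas you prove it: the Urysohn function $g\equiv1$ on $\subG$ and the bound $\abs{\Nrx(\fktG)(\twoelem)}\leq2\absinf{\fktG}\,\Nrx(g)(\twoelem)$ on $\Cck$ is exactly the standard proof, and spelling it out makes the statement self-contained and matched to the inductive limit topology the paper put on $\Cc$. One point to keep explicit in a final write-up: your wholesale use of the capped truncations $\fktCSeq{n}$ in place of the $\fktCseq{n}$ defining $\Nrx$ rests on the remark following Lemma~\ref{exGW} on independence of the truncating sequence; the paper's own proof silently relies on the same remark, so you are no worse off, but it is worth citing at the start as you do.
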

\begin{proof}
We show the linearity for the positive functions $\fktG\in\Cc^+$. Then the statement follows easily for all $\fktG\in\Cc$ from Equation 
(\ref{N^'}).

We start by showing two inequalities concerning the sequence $\fktCSeq{n}$.
To verify these inequalities we will only consider non negative numbers $s,t\in\IR$ and differ for each inequality four cases.
The first inequality is
$$\fktCSeq{n}(s+t)\leq \fktCSeq{n}(s)+\fktCSeq{n}(t):$$
Case 1: For $s+t \leq n$ we have $s,t\leq n$ and 
$\fktCSeq{n}(s+t) = s+t =  \fktCSeq{n}(s)+\fktCSeq{n}(t)$.\\
Case 2: For $s+t > n$ with $s\leq n$ and $t\leq n$ we have 
$\fktCSeq{n}(s+t) = n < s+t = \fktCSeq{n}(s)+\fktCSeq{n}(t)$.\\
Case 3: For $s+t > n$ with $s\leq n$ and $t> n$ we have 
$\fktCSeq{n}(s+t) = n \leq s+n =  \fktCSeq{n}(s)+\fktCSeq{n}(t)$.\\
Case 4: For $s+t > n$ with $s> n$ and $t> n$ we have 
$\fktCSeq{n}(s+t) = n < 2n = \fktCSeq{n}(s)+\fktCSeq{n}(t)$.

Next we consider the second inequality 
$$\fktCSeq{n}(s)+\fktCSeq{n}(t) \leq \fktCSeq{2n}(s+t):$$

Case 1: For $s,t\leq n$ we have $s+t \leq 2n$ and obtain
$\fktCSeq{n}(s)+\fktCSeq{n}(t) =s+t \leq \fktCSeq{2n}(s+t)$.\\
Case 2: For $s+t \leq 2n$ with $s < n$ and $t > n$ we have 
$\fktCSeq{n}(s)+\fktCSeq{n}(t) = s+n < s+t =\fktCSeq{2n}(s+t)$.\\
Case 3: For $s+t > 2n$ with $s\leq n$ and $t> n$ we have 
$\fktCSeq{n}(s)+\fktCSeq{n}(t) = s+n \leq 2n = \fktCSeq{2n}(s+t)$.\\
Case 4: For $s+t > 2n$ with $s> n$ and $t> n$ we have 
$\fktCSeq{n}(s)+\fktCSeq{n}(t)=2n=\fktCSeq{2n}(s+t)$.

With $\fktG\in\Cc^+$ and the additivity of $N$ follows 
$$
\fktCSeq{n}(N(\fktG+\psi))\leq \fktCSeq{n}(N(\fktG)) +\fktCSeq{n}(N(\psi))
~\text{ and }~ \fktCSeq{n}(N(\fktG)) + \fktCSeq{n}(N(\psi)) \leq \fktCSeq{2n}(N(\fktG+\psi)).
$$
Thus 
$$\lim_{n\in\IN}\fktCSeq{n}(N(\fktG+\psi)) = \lim_{n\in\IN}\fktCSeq{n}(N(\fktG))+
\lim_{n\in\IN}\fktCSeq{n}(N(\psi)) = \lim_{n\in\IN}\fktCSeq{2n}(N(\fktG+\psi))$$
holds. We deduce 
$$\lim_{n\in\IN}\gelf(\fktCSeq{n}(N(\fktG+\psi))) = \lim_{n\in\IN}\gelf(\fktCSeq{n}(N(\fktG)))+
\lim_{n\in\IN}\gelf(\fktCSeq{n}(N(\psi))) = \lim_{n\in\IN}\gelf(\fktCSeq{2n}(N(\fktG+\psi)))$$
(as $\gelf$ is linear and continuous) and obtain 
$$\Nrx(\fktG+\psi)=\Nrx(\fktG)+\Nrx(\psi).$$

To show the multiplicativity of $\Nrx$ we deduce $\Nrx(k\fktG)=k\Nrx(\fktG)$ from the additivity of $\Nrx$ for all $k\in\IN$ and thereby
$k \Nrx(\frac{1}{k}\fktG)=\Nrx(\frac{k}{k}\fktG)=\Nrx(\fktG)$. Thus we have 
$$\frac{l}{k}\Nrx(\fktG)=\frac{1}{k}\Nrx(l\fktG)=\Nrx(\frac{l}{k}\fktG),$$
which gives us 
$$\Nrx(\lambda\fktG)=\lambda\Nrx(\fktG)$$
for all $\lambda\in\IQ$.
Finally we obtain the compatibility with scalar multiplication of $\Nrx$ by the approximation of elements in $\IR$ via elements in $\IQ$ and the continuity of $\gelf$.
Obviously the function $\Nrx$ is positive since $N,\fktCseq{n}$ and $\gelf$ are positive and $\fktCseq{n}$ is monotonously increasing. Thus we obtain the continuity as well, as positive linear functionals are continuous.
\end{proof}

At this point, we have that the map $\fktG\mapsto\Nrx(\fktG)(\twoelem)$ is bounded, positive and linear for each $\twoelem\in\twoset'$ and
all $\fktG\in\Cc$.
Thus $\Nrx$ is a positive continuous functional.
Now we can apply the Riesz-Markov theorem and obtain for each $\twoelem\in\twoset'$ the existence of a unique positive measure $\threelem$ satisfying 
$$\Nrx(\fktG)(\twoelem)=\int\fktG\,d\threelem\defas\threelem(\fktG).$$

Like in the case of TBMDS, we set again
$$\threeset\defas\{\threelem:\twoelem\in\twoset'\} ~\mbox{ and }~
\identif:\twoset'\too\threeset,~\twoelem\mapsto\threelem.$$
The invariance under translation follows similarly to the proof of Proposition \ref{idenifIso}. Thus we have the identification of $\twoset'$ with $\threeset$.

Next we define the map $N':\Cc\too\Ltwo$ via 
$$N'(\fktG):=\left[\Nrx(\fktG)\right],$$
where $\left[\Nrx(\fktG)\right]$ denotes the equivalence class of $\Nrx(\fktG)$ in $\Ltwo.$
Clearly $\Nrx(\fktG)= f$ almost everywhere for all $f\in N'(\fktG)$ by definition. 
Finally we want to make sure that the extension $\Unit$ of the Gelfand representation $\gelf$ satisfies the necessary condition for the equivalence of spatial processes.

\begin{prop}
Let ${\Unit :\Lonetwo\too\Ltwo}$ be the extension of $\gelf$ to $\Lonetwo$. Then
$\Nrx(\fktG)\in \Unit(N(\fktG))$
holds for all $\fktG\in\Cc$ so that
$$N'=\Unit\circ N.$$
\end{prop}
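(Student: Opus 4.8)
The plan is to establish the identity first on the positive cone $\Cc^+$ and then to extend it to all of $\Cc$ by linearity via the decomposition (\ref{N^'}). The crux is to reconcile two a priori different notions of limit: the pointwise limit that defines $\Nrx(\fktG)$ on the co-null set $\twoset'$, and the $\Ltwo$-norm limit that defines $\Unit(N(\fktG))$.

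So I would fix $\fktG\in\Cc^+$ and argue as follows. Since $N$ is positive we have $N(\fktG)\geq 0$ almost everywhere, and since $N(\fktG)\in\Lonetwo$ the truncations satisfy $0\leq\fktCseq{n}(N(\fktG))\leq N(\fktG)$ pointwise with $\fktCseq{n}(N(\fktG))\uparrow N(\fktG)$, because $\fktCseq{n}$ increases monotonically to the identity on $\IR^+$. Dominated convergence in $L^2$, with dominating integrand $|\fktCseq{n}(N(\fktG))-N(\fktG)|^2\leq N(\fktG)^2\in L^1$, then yields $\fktCseq{n}(N(\fktG))\to N(\fktG)$ in $\Lonetwo$, exactly as in Lemma \ref{exGW}. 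Because $\fktCseq{n}(N(\fktG))\in\A$ and $\Unit$ coincides with $\gelf$ on $\A$, continuity of the unitary $\Unit$ gives
$$\gelf(\fktCseq{n}(N(\fktG)))=\Unit(\fktCseq{n}(N(\fktG)))\too \Unit(N(\fktG)) \quad\text{in } \Ltwo.$$
On the other hand, by the very definition of $\Nrx$, the left-hand side converges pointwise on $\twoset'$ to $\Nrx(\fktG)$. Passing to a subsequence of the $\Ltwo$-convergent sequence that also converges $\mtwo$-almost everywhere, its almost-everywhere limit must coincide with both $\Unit(N(\fktG))$ and $\Nrx(\fktG)$; hence $\Nrx(\fktG)=\Unit(N(\fktG))$ almost everywhere, i.e.\ $\Nrx(\fktG)\in\Unit(N(\fktG))$.

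For general $\fktG\in\Cc$ I would decompose $\fktG=\fktG_{r_+}-\fktG_{r_-}+i\fktG_{c_+}-i\fktG_{c_-}$ with the four summands in $\Cc^+$, and invoke (\ref{N^'}) together with the linearity of $\Unit$ and of $N$; applying the positive-cone case to each summand then gives $\Nrx(\fktG)\in\Unit(N(\fktG))$ for every $\fktG\in\Cc$, which is precisely the assertion $N'=\Unit\circ N$. The step I expect to be the main obstacle is this reconciliation of the two limits: one must ensure that the pointwise limit $\Nrx(\fktG)$, which is only defined on $\twoset'$, genuinely represents the $\Ltwo$-limit $\Unit(N(\fktG))$. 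This is handled by the standard fact that an $\Ltwo$-convergent sequence admits an almost-everywhere convergent subsequence with the same limit, combined with uniqueness of almost-everywhere limits; the co-nullity of $\twoset'$, established in the preceding proposition, guarantees that no discrepancy can occur on a set of positive measure.
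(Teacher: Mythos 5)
Your proposal is correct and follows essentially the same route as the paper's own proof: both arguments rest on the facts that $\Unit$ agrees with $\gelf$ on $\A$, that $\fktCseq{n}(N(\fktG))\to N(\fktG)$ in $\Lonetwo$, and that the unitary (hence continuous) $\Unit$ allows one to identify the $\Ltwo$-limit of $\gelf(\fktCseq{n}(N(\fktG)))$ with $\Unit(N(\fktG))$, of which the pointwise limit $\Nrx(\fktG)$ is then a representative. If anything, you are more careful than the paper, which runs the truncation argument for all $\fktG\in\Cc$ at once and passes silently from $\Ltwo$-convergence to the pointwise identification, whereas you make explicit the dominated-convergence step on the positive cone, the almost-everywhere-convergent subsequence argument reconciling the two limits, and the reduction of general $\fktG\in\Cc$ to $\Cc^+$ via the decomposition (\ref{N^'}).
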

\begin{proof}
Since $\Unit$ coincides with $\gelf$ on $\A$ and inherits its properties as its extension it follows that 
$$\lim_{n\in\IN}\gelf(\fktCseq{n}(N(\fktG)))(\twoelem)
=\lim_{n\in\IN}\Unit(\fktCseq{n}(N(\fktG)))(\twoelem)$$ 
in $\Ltwo$ and is finite almost everywhere for all $\fktG\in\Cc$ as well as positive and linear.
Since $\Unit$ is unitary we obtain
$$\lim_{n\in\IN}\Unit(\fktCseq{n}(N(\fktG)))(\twoelem)
=\Unit(\lim_{n\in\IN}\fktCseq{n}(N(\fktG)))(\twoelem)
=\Unit(N(\fktG))(\twoelem).$$
Thus $\Nrx(\fktG)(\twoelem)\in\Unit(N(\fktG))(\twoelem)$ holds for all $\fktG\in\Cc$, which implies 
$N'=\Unit\circ N$.
\end{proof}

Now we can continue in the same way as we did for the TBMDS. We will not go into details since the proofs are analogus to the case already considered.


\section{Appendix}
The appendix has two parts. The first part deals with a slightly different definition of the function $N:\Cc\too\LtwoG$ in the
articles \cite{LeMo} and \cite{BaLe04}.
Indeed these two definitions appear in many places in the literature. However, they lead to the same autocorrelation measure $\gamma$.
This is discussed below. 

The convolution of two functions $\fktG,\psi\in\CC$ defined as $\fktG * \psi\in\CC$ is given by 
$$(\psi*\fktG)(\g)=(\fktG * \psi)(\g):=\int_{\G}\fktG(\g-s)\psi(s)\,\mathrm ds =\int_{\G}\fktG(s)\psi(\g-s)\,\mathrm ds.$$

The convolution of a measure $\gamma$ with a function $\fktG\in\CC$ is given by
$$(\fktG*\gamma)(\g):=\int_{\G}\fktG(\g-s)\,\mathrm d\gamma(s)$$
 and
$\tilde{\fktG}(\g):=\overline{\fktG}(-\g)$ for $\fktG\in\CC$.
Let $\N=(N,\oneset,\mone,T)$ be a spatial process,more specifically even a TBMDS, as defined in  the preliminaries. 

In \cite{LeMo}
we have $\langle f,g \rangle_1 :=\int_{\twoset}f\overline{g}\,\mathrm dm$ for the inner product of $f,g\in\LtwoG$.
The linear function  $N:=N_1:\Cc\too\LtwoG$ is given via
$$N_1(\fktG)(\onelem):=\int_G\fktG (\g)\,\mathrm d\onelem $$
and the autocorrelation measure $\gamma_1$ is unique with
$$\gamma_1(\fktG*\tilde{\psi}):= \langle N_1(\fktG),N_1(\psi) \rangle.$$

In \cite{BaLe04} the inner product in $\LtwoG$ is given by $\langle f,g \rangle_2 :=\int_{\twoset}\overline{f}g\,\mathrm dm$. 
The function $N:=N_2:\Cc\too\LtwoG$ is now defined as 
$$N_2(\fktG)(\onelem):=\int_G\fktG (-\g)\,\mathrm d\onelem $$
and the autocorrelation measure $\gamma_2$ is unique with
$$(\tilde{\fktG}*\psi*\gamma_2)(0):= \langle N_2(\fktG),N_2(\psi) \rangle.$$

\begin{prop}
Let everything be given as discussed above, then
$\gamma_1=\gamma_2$
holds.
\end{prop}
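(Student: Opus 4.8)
The plan is to show that both autocorrelation measures $\gamma_1$ and $\gamma_2$ are uniquely determined by their values on the dense set of functions of the form $\fktG*\tilde\psi$, and that these values coincide. The strategy is to compute both inner products explicitly in terms of the underlying measure $\onelem$ and compare. First I would unwind the definitions: since $N_1(\fktG)(\onelem)=\int_\G\fktG\,\mathrm d\onelem$ and $N_2(\fktG)(\onelem)=\int_\G\fktG(-\cdot)\,\mathrm d\onelem$, I would express each side using the respective inner product convention, being careful that $\langle\cdot,\cdot\rangle_1$ conjugates the second slot while $\langle\cdot,\cdot\rangle_2$ conjugates the first.

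Next I would carry out the key computation. For $\gamma_1$, using $\langle N_1(\fktG),N_1(\psi)\rangle_1=\int_\oneset N_1(\fktG)\overline{N_1(\psi)}\,\mathrm dm$, I expand to obtain a double integral against $\onelem$ and $\overline{\onelem}$, then integrate over $\oneset$. For $\gamma_2$, I would do the analogous expansion with $\langle N_2(\fktG),N_2(\psi)\rangle_2=\int_\oneset\overline{N_2(\fktG)}N_2(\psi)\,\mathrm dm$, where the sign reversal in the argument of $N_2$ compensates for the swapped conjugation slots. The goal is to verify that the functional $\fktG*\tilde\psi\mapsto\langle N_1(\fktG),N_1(\psi)\rangle_1$ and the functional determined by $(\tilde{\fktG}*\psi*\gamma_2)(0)=\langle N_2(\fktG),N_2(\psi)\rangle_2$ produce the same value on $\fktG*\tilde\psi$. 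I would use the identities $\widetilde{\fktG}(\g)=\overline{\fktG}(-\g)$ and the definition of convolution to rewrite $(\tilde{\fktG}*\psi*\gamma_2)(0)$ as $\gamma_2(\fktG*\tilde\psi)$ (up to the appropriate substitution $s\mapsto-s$), thereby putting both defining relations on the same test function.

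Once both inner products are shown to equal the same expression in terms of $m$ and $\onelem$, the conclusion follows from uniqueness: the autocorrelation measure is the unique measure satisfying the defining relation on the set $\{\fktG*\tilde\psi:\fktG,\psi\in\Cc\}$, which is dense in $\Cc$ (this is where the technical convergence statement promised for the appendix is used). Since $\gamma_1$ and $\gamma_2$ agree on this dense set and both are continuous linear functionals, they must be equal as measures.

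The main obstacle will be bookkeeping the two differing conventions simultaneously: the swap between conjugating the first versus the second argument in the inner product, combined with the reflection $\fktG(\cdot)$ versus $\fktG(-\cdot)$ in the two definitions of $N$, and the reflection hidden in $\tilde\psi$. I expect these three sign or reflection choices to interact so that they cancel, but keeping track of exactly how $(\tilde\fktG*\psi*\gamma)(0)$ relates to $\gamma(\fktG*\tilde\psi)$ through the substitution $s\mapsto-s$ in the defining integral is the delicate point where an error would most easily creep in. I would therefore write out $(\tilde\fktG*\psi)$ explicitly and confirm it equals $\overline{(\psi*\tilde\fktG)}$ reflected appropriately before concluding.
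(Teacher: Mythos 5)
Your overall strategy coincides with the paper's: unwind the two conventions, relate $N_1$ and $N_2$ through the reflection $\fktG_-(\g):=\fktG(-\g)$, exploit the swapped conjugation slots of the two inner products, put both defining relations on a common test function, and finish by density. However, the pivotal rewriting step, exactly at the point you yourself flag as delicate, is false as you state it. For a \emph{fixed} pair $(\fktG,\psi)$ one has
\begin{align*}
(\tilde{\fktG}*\psi)(-s)
=\int_{\G}\tilde{\fktG}(-s-u)\,\psi(u)\,\mathrm du
=\int_{\G}\overline{\fktG(s+u)}\,\psi(u)\,\mathrm du
=\overline{(\fktG*\tilde{\psi})(s)},
\end{align*}
so that $(\tilde{\fktG}*\psi*\gamma_2)(0)=\int_{\G}(\tilde{\fktG}*\psi)(-s)\,\mathrm d\gamma_2(s)=\gamma_2\bigl(\overline{\fktG*\tilde{\psi}}\bigr)$, \emph{not} $\gamma_2(\fktG*\tilde{\psi})$: a complex conjugation survives the substitution $s\mapsto -s$. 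Likewise the identity you intend to ``confirm'' cannot hold: since $\tilde{\fktG}*\psi=\psi*\tilde{\fktG}$ by commutativity, the claim that $\tilde{\fktG}*\psi$ equals $\overline{(\psi*\tilde{\fktG})}$ reflected says that $\psi*\tilde{\fktG}$ equals its own conjugate reflection, which fails for general complex-valued $\fktG,\psi$. The correct identity is $(\tilde{\fktG}*\psi)(\g)=\overline{(\fktG*\tilde{\psi})(-\g)}$, i.e. $\tilde{\fktG}*\psi=\widetilde{\fktG*\tilde{\psi}}$.

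There are two ways to close this gap. The paper's way: from $\langle f,g\rangle_1=\langle g,f\rangle_2$ and $N_1(\fktG)=N_2(\fktG_-)$ one gets $\gamma_1(\fktG*\tilde{\psi})=\langle N_1(\fktG),N_1(\psi)\rangle_1=\langle N_2(\psi_-),N_2(\fktG_-)\rangle_2$, so the defining relation of $\gamma_2$ must be invoked at the \emph{swapped and reflected} pair $(\psi_-,\fktG_-)$ rather than at $(\fktG,\psi)$; then $\widetilde{(\psi_-)}*\fktG_-=(\tilde{\psi}*\fktG)_-$ is a pure reflection with no conjugation, and evaluating its convolution with $\gamma_2$ at $0$ lands exactly on $\gamma_2(\fktG*\tilde{\psi})$. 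Alternatively, your conjugated version can be repaired: carrying your computation through correctly yields $\gamma_2\bigl(\overline{\fktG*\tilde{\psi}}\bigr)=\gamma_1\bigl(\overline{\fktG*\tilde{\psi}}\bigr)$, and since $\overline{\fktG*\tilde{\psi}}=\overline{\fktG}*\widetilde{\overline{\psi}}$ is again of the tested form, agreement on all such conjugates still gives agreement on the dense family. Two further remarks: the explicit expansion into double integrals against $\onelem$ is unnecessary (the relations $N_1(\fktG)=N_2(\fktG_-)$ and $\langle f,g\rangle_1=\langle g,f\rangle_2$ suffice), and the density of the convolutions $\fktG*\tilde{\psi}$ in $\Cc$ is a standard approximate-identity fact; the appendix proposition you cite concerns the convergence $\fktC\circ h_n\to\fktC\circ h$ needed for the fullness condition in the PMDS section and plays no role here.
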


\begin{proof}
By these two definitions of the autocorrelation measure we find
 $\langle f,g \rangle_1 = \langle g,f \rangle_2$ for $f,g\in\LtwoG$
and $N_1(\fktG)(\onelem) = N_2(\fktG_-)(\onelem)$ for $\fktG\in\CC$ with $\fktG_-(\g):=\fktG(-\g)$
Thus we obtain $\langle N_1(\fktG),N_1(\psi) \rangle_1 = \langle N_2(\psi_-), N_2(\fktG_-)\rangle_2.$\\

We first show that $(\fktG_-* \psi_-)(\g)=(\fktG* \psi)_-(\g)$:
\begin{align*}
(\fktG_-* \psi_-)(\g)
&= \int_{\G}\fktG_-(s)\psi_-(\g-s)\,\mathrm ds\\
&= \int_{\G}\fktG(-s)\psi(s-\g)\,\mathrm ds\\
&= \int_{\G}\fktG(s)\psi(-\g-s)\,\mathrm ds\\
&= (\fktG*\psi)(-t)\\
\end{align*}

Putting all of this together and replacing $\psi$ by $\tilde{\psi}$ we obtain
\begin{align*}
\gamma_1(\fktG*\tilde{\psi})
&= \langle N_1(\fktG),N_1(\psi) \rangle_1\\ 
&= \langle N_2(\psi_-), N_2(\fktG_-)\rangle_2\\
&= (\tilde{\psi_-}*\fktG_-*\gamma_2)(0)\\
&= (\tilde{\psi}*\fktG)_-*\gamma_2(0)\\
&= \int_G(\fktG*\tilde{\psi})_-(-\g)\,\mathrm d\gamma_2(\g)\\
&=\int_G(\fktG*\tilde{\psi})(\g)\,\mathrm d\gamma_2(\g)\\
&= \gamma_2(\fktG*\tilde{\psi}).
\end{align*}
Hence it is $\gamma_1=\gamma_2$, as $\fktG*\psi$ for $\fktG,\psi\in\CC$ is dense in $\CC$.
\end{proof}

\bigskip

The second part of the appendix deals with the fullness condition of a spatial process on positive  functions. As already discussed, it boils down to the following proposition.

\begin{prop}
Let $(h_n)_{n\in\IN}$  be a sequence in $\LtwoG$ that converges to $h\in\LtwoG$ and let $\fktC\in\Cu$. Then the composition $h_n\circ \fktC$ converges to $h\circ\fktC$.
\end{prop}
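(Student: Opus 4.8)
The plan is to prove the convergence directly in the norm $\abstwo{\cdot}$, exploiting the two defining features of $\fktC\in\Cu$ — that it is bounded and that it is \emph{uniformly} continuous — together with the fact that $m$ is a probability measure, so that $m(\twoset)=1$. Here I read the composition pointwise as $\twoelem\mapsto\fktC(h_n(\twoelem))$, and I write it $\fktC\circ h_n$; since $\fktC$ is bounded and $m$ is finite, each $\fktC\circ h_n$ indeed lies in $\LtwoG$. The essential difficulty is that convergence $h_n\too h$ in $\LtwoG$ yields neither pointwise nor uniform convergence of the arguments, so uniform continuity cannot be applied pointwise on all of $\twoset$ at once. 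I would instead route the argument through convergence in measure, which \emph{does} follow from $L^2$-convergence via the Chebyshev inequality.

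First I would fix $\epsilon>0$ and invoke the uniform continuity of $\fktC$ to obtain a $\delta>0$ with $\abs{\fktC(z)-\fktC(w)}<\epsilon$ whenever $\abs{z-w}<\delta$. For each $n$ I would split the base space as $\twoset=A_n\cup B_n$, where $A_n:=\{\twoelem\in\twoset:\abs{h_n(\twoelem)-h(\twoelem)}<\delta\}$ and $B_n$ is its complement. On $A_n$ the integrand $\abs{\fktC(h_n)-\fktC(h)}^2$ is bounded by $\epsilon^2$, so its integral is at most $\epsilon^2 m(\twoset)=\epsilon^2$; here the finiteness of $m$ is used crucially. On $B_n$ I would use only boundedness, estimating the integrand by $(2\absinf{\fktC})^2$, and then control the size of $B_n$ by the Chebyshev inequality,
\[
m(B_n)=m\big(\{\abs{h_n-h}\geq\delta\}\big)\leq \frac{1}{\delta^{2}}\,\abstwo{h_n-h}^{2},
\]
which tends to $0$ as $n\to\infty$ by hypothesis.

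Putting the two pieces together gives the bound
\[
\abstwo{\fktC\circ h_n-\fktC\circ h}^{2}\;\leq\;\epsilon^{2}+\frac{4\,\absinf{\fktC}^{2}}{\delta^{2}}\,\abstwo{h_n-h}^{2}.
\]
Taking $\limsup_{n\to\infty}$ makes the second summand vanish, so $\limsup_n\abstwo{\fktC\circ h_n-\fktC\circ h}^2\leq\epsilon^2$; since $\epsilon>0$ was arbitrary the limit superior is $0$ and the claimed convergence follows.

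The step I expect to be the genuine obstacle is precisely the passage from $L^2$-convergence to a usable mode of convergence: one cannot simply compose and pass to the limit, because $\fktC\circ h_n$ need not converge pointwise. The resolution is the splitting above, in which \emph{boundedness} of $\fktC$ neutralises the (measure-theoretically small) set $B_n$ on which the arguments are far apart, while \emph{uniform} continuity controls the complementary set uniformly in $\twoelem$ — and this last control yields a finite bound only because $m(\twoset)=1$. Dropping either the finiteness of $m$ or the uniformity of the continuity breaks the estimate, which pinpoints why both hypotheses are present.
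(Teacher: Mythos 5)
Your proof is correct and takes essentially the same route as the paper's own: split $\twoset$ according to whether $\abs{h_n-h}<\delta$, use the uniform continuity of $\fktC$ together with $m(\twoset)=1$ on the good set, and use the boundedness of $\fktC$ plus the Chebyshev inequality on the bad set. Your write-up is in fact slightly cleaner, as the paper's own Chebyshev display contains a typo, writing $\int_X\abs{\fktC\circ h_n-\fktC\circ h}^2\,dm$ where $\int_X\abs{h_n-h}^2\,dm$ is meant.
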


\begin{proof}
We follow the proof of Lemma 2.4 in \cite{LeSt} with some slightly modifications.

 As $\fktC\in\Cu$ there exists to every $\epsilon >0$ a $\delta >0$ such that $\abs{\fktC(x)-\fktC(y)}<\frac{\epsilon}{2}$ for all $x,y\in\IC$ with $\abs{x-y}<\delta$. We set 
$$A_{\delta}^n:=\{x\in X\ : \abs{h_n(x)-h(x)}\geq\delta\}$$
and split up the integral in the following way
\begin{align*}
\int_X \abs{\fktC\circ h_n - \fktC\circ h}^2 \,dm
= \int_{X\setminus A_{\delta}^n} \abs{\fktC\circ h_n - \fktC\circ h}^2 \,dm
+ \int_{A_{\delta}^n} \abs{\fktC\circ h_n - \fktC\circ h}^2 \,dm
\end{align*}
For the first part we obtain
\begin{align*}
\int_{X\setminus A_{\delta}^n} \abs{\fktC\circ h_n - \fktC\circ h}^2 \,dm
< \frac{\epsilon^2}{4} m(X\setminus A_{\delta}^n)
< \frac{\epsilon^2}{4}
\end{align*}
as $\abs{ h_n - h} < \delta$. For the second term we estimate
$$\abs{\fktC\circ h_n - \fktC\circ h} \leq 2\absinf{\fktC}$$
and obtain with the Tschebyscheff inequality 
$$\int_{A_{\delta}^n} \abs{\fktC\circ h_n - \fktC\circ h}^2 \,dm
\leq 4\absinf{\fktC}^2 m({A_{\delta}^n})
\leq \frac{4\absinf{\fktC}^2}{\delta^2}\int_X \abs{\fktC\circ h_n - \fktC\circ h}^2 \,dm.$$
Since $h_n$ converges to $h$ in $\LtwoG$, the last integral becomes arbitrary small for $n\to\infty$ such that 
$$\int_X \abs{\fktC\circ h_n - \fktC\circ h}^2 \,dm <\frac{\delta^2}{4\absinf{\fktC}^2} \frac{\epsilon}{2}$$
holds for $n$ sufficiently large.  Finally we have
$$\int_{X} \abs{\fktC\circ h_n - \fktC\circ h}^2 \,dm
< \frac{\epsilon^2}{4} + \frac{\epsilon}{2}<\epsilon,$$
which gives us the convergence of $\fktC\circ h_n$ to $\fktC\circ h$ in $\LtwoG$.
\end{proof}
\renewcommand{\refname}{Bibliographie}
\addcontentsline{toc}{section}{\refname}

\end{document}